\documentclass{amsart}

\usepackage[colorlinks,linkcolor=blue,anchorcolor=blue,citecolor=blue,breaklinks=true,unicode]{hyperref}

\usepackage{amscd}
\usepackage{amsmath, amssymb}
\usepackage{amsfonts}
\usepackage{color}

\renewcommand{\leq}{\leqslant}
\renewcommand{\geq}{\geqslant}
\renewcommand{\le}{\leqslant}
\renewcommand{\ge}{\geqslant}

\newcommand{\be}{\begin{equation}}
\newcommand{\ee}{\end{equation}}

\newcommand{\p}{\partial}
\newcommand{\ol}{\overline}
\newcommand{\ul}{\underline}

\begin{document}
\newtheorem{claim}{Claim}
\newtheorem{theorem}{Theorem}[section]
\newtheorem{lemma}[theorem]{Lemma}
\newtheorem{corollary}[theorem]{Corollary}
\newtheorem{proposition}[theorem]{Proposition}
\newtheorem{question}{question}[section]
\newtheorem{definition}[theorem]{Definition}
\newtheorem{remark}[theorem]{Remark}

\numberwithin{equation}{section}

\title[Prescribed curvature equations]
{The Dirichlet problem for hessian quotient type curvature equations in Minkowski space}

\author[M. Guo]{Mengru Guo}
\address{School of Mathematics, Harbin Institute of Technology,
	Harbin, Heilongjiang, 150001, China}
\email{22B912007@stu.hit.edu.cn}

\author[Y. Jiao]{Yang Jiao}
\address{School of Mathematical Sciences, Fudan University, Shanghai, 200433, China}
\email{jiaoy@fudan.edu.cn}
%\thanks{The second author is supported by the National Natural Science Foundation of China (Grant No. 12271126), the Natural Science Foundation of Heilongjiang Province (Grant No. YQ2022A006), and the Fundamental Research Funds for the Central Universities (Grant No. HIT.OCEF.2022030).}

\begin{abstract}
In this paper, we consider the Dirichlet problem for a class of prescribed Hessian quotient type curvature equations in Minkowski space. 
For non-convex domains, we prove the existence theorem by establishing the \emph{a priori} estimates without subsolution assumption and Serrin-type condition. 
%Due to the special structure of the equations, we drop the convexity restrictions on $\Omega$. 

{\em Keywords:} Minkowski space; Hessian quotient type curvature equations; \emph{a priori} $C^2$ estimates.
\end{abstract}

\maketitle

\section{Introduction}

In Minkowski space $\mathbb{R}^{n,1}$, a function $u$ defined on a bounded domain $\Omega \subset \mathbb{R}^n$ is called spacelike if
\[
\sup_{\ol{\Omega}}|Du|<1.
\]
In this paper, we focus on the prescribed curvature problems for graph $M_u$ of a spacelike function $u$, where   
$$M_u:= \{(x, u(x)): x \in \Omega \subset \mathbb{R}^n \}.$$
We shall consider the Dirichlet problem for the following Hessian quotient type curvature equations ($k <n$):
\begin{equation}
	\label{GJ1.1}
	\frac{\sigma_k}{\sigma_l}(\lambda(\eta[M_u])) = \psi(X)>0\; \mbox{in} \;\Omega
\end{equation}
with $u=\varphi$ on $\partial \Omega$. Here $X=(x,u(x))$ is the position vector, $\varphi$ is a spacelike function which can be extended to $\overline{\Omega}$ in a suitable way, 
	$\sigma_{k}$ is the $k$-th elementary symmetric function, $\lambda(\eta[M_u])=(\lambda_1, \cdots, \lambda_n)$ denotes the eigenvalues of $\eta$ with respect to $g$ on $M_u$. 
 $\eta$ is the $(0,2)$-tensor field defined on $M$ by
\[
\eta = H g - h,
\] 
where $H (X)$ is the mean curvature of $M_u$ at $X \in M_u$, $g = \{g_{ij}\}$ is the induced Riemannian metric and $h = \{ h_{ij}\}$ is the second fundamental form.

When $k = n$ and $l = 0$, this problem was studied by H. Jiao  and the first author in \cite{GJ25} for convex and admissible domain.

In Euclidean space, Chen-Tu-Xiang \cite{CTX25} derived the existence and uniqueness results for the Dirichlet problem of 
\[
	\frac{\sigma_k}{\sigma_l}(\lambda(\eta[M_u])) = \psi(X, \nu)\geq 0,\;\; k<n
\]
with $\varphi=0$ in the degenerate case. 
The current work aims to extend the results of \cite{CTX25} from Euclidean space to Minkowski space  $\mathbb{R}^{n,1}$ with $\psi=\psi(X)>0$, whose induced metric is given by
\[
ds^2=dx^2_1+\cdots+dx^2_n-dx^2_{n+1}.
\]

We obtain three observations in this research. 
Firstly, when $k<n$, the convexity type assumption on the boundary is automatically satisfied.
As pointed out by Rirong Yuan, this is because the corresponding G\aa rding's cone is of Type 2 in the sense of \cite{CNSIII}. 
The relationship between \emph{the partial uniform ellipticity} of the equation and the corresponding cone is established in \cite{Yuan25b}(and references therein).
Besides, its connection to \emph{the tangent cone at infinity} (see \cite{GGQ22,GN23}) is discussed in \cite{Yuan25a}.
Secondly, the subsolution assumption can be weakened. Specifically, it suffices to assume that the spacelike boundary function \(\varphi\) can be extended to \(\overline{\Omega}\) in a suitable way, which is a necessary condition since the solution $u$ itself serves as such an extension.
This is only used to provide a starting point for the continuity method to obtain the existence result.
Thirdly, the special algebraic structure of \eqref{GJ1.1} for \(k<n\) simplifies the proof of boundary second-order double-normal estimates if \(\varphi=0\). Inspired by this idea, we further establish the corresponding estimates for general \(\varphi\) via Newton's formulas.

In contrast to the Euclidean case, 
when differentiating equations twice in the Minkowski space, uncontrollable terms including the square of curvatures will appear. This arises from the opposite sign of the Gauss equation.
Hence the Dirichlet problem for $k$-curvature equations 
\begin{equation}
	\label{GJ1.2}
	\sigma_{k} (\kappa[M_u]) = \psi
\end{equation}
remains unsolved for $3 \leq k \leq n - 3$, 
although the corresponding equations in the Euclidean context have long been well known, where 
\[
\sigma_{k} (\kappa[M_u]) = \sum_ {1 \leq i_{1} < \cdots < i_{k} \leq n}
\kappa_{i_{1}} \cdots \kappa_{i_{k}}, 
\]
$\kappa[M_u]=(\kappa_{1},\cdots,\kappa_{n})$ denotes the principal curvatures of $M_{u}$. 
Fortunately, based on the properties of \eqref{GJ1.1} in the corresponding G\aa rding's cone, we have no difficulty deriving global second-order estimates.
Therefore, we can solve the Dirichlet problem for such curvature equations in Minkowski space. 
The reader is referred to \cite{CNSV, CTX21, CTX20, GRW15, Ivochkina90, Ivochkina91, ILT96, LT94,   LRW17,  Zhou24} for the study of the Dirichlet problem for \eqref{GJ1.2} and the quotient type curvature equation in Euclidean space.

For the function $v(x) \in C^2(\Omega)$, we denote  $v_i = \frac{\p v}{\p x_i}$, $v_{ij} = \frac{\p^2 v}{\p x_i \p x_j}$, and use
$Dv = (v_{1}, \cdots, v_{n})$ and $D^2 v = \{v_{ij}\}$ to represent the ordinary gradient and Hessian matrix respectively.

Clearly, for the spacelike function $u$,  the Minkowski metric restricted to $M_u$ induces a Riemannian metric on $M_u$:
\[
g_{ij} = \delta_{ij} - u_i u_j, \ 1\leq i,j\leq n.
\]
The inverse of the metric is
\[
g^{ij} = \delta_{ij} + \frac{u_i u_j}{1 - |Du|^2}, \ 1\leq i,j\leq n.
\]
The second fundamental form of $M_u$ with respect to its upward unit normal vector field
\[\nu=\frac{(Du, 1)}{\sqrt{1-|Du|^2}}\]
is given by
\[h_{ij}=\frac{u_{ij}}{\sqrt{1-|Du|^2}}.\]

%Define the $(0, 2)$-tensor field $\eta$ on $M$ by\[\eta = H g - h,\] where $H (X)$ is the mean curvature of $M_u$ at $X = (x, u (x)) \in M_u$, $g = \{g_{ij}\}$, $h = \{ h_{ij}\}$. $\lambda(\eta[M_u]) = (\lambda_1, \cdots, \lambda_n)$ denote the eigenvalues of $\eta$ with respect to $g$ on $M_u$.  

In this paper, we consider the existence of smooth spacelike graphic hypersurfaces satisfying the Dirichlet problem
\begin{equation}
	\label{GJ1.3}
	\left\{ \begin{aligned}
		\frac{\sigma_k}{\sigma_l}(\lambda(\eta[M_u])) &= \psi(x, u) & \;\;~  \mbox{ in } \Omega, \\
		u &= \varphi& \;\;~  \mbox{ on } \partial \Omega,
	\end{aligned} \right.
\end{equation}
where $\psi \in C^\infty (\ol \Omega \times \mathbb{R}) > 0$, $\varphi\in C^{2}(\partial \Omega)$ is spacelike.

Define the G\aa rding's cone 
\[
\Gamma_k := \{\lambda = (\lambda_1, \cdots, \lambda_n) \in \mathbb R^n: \sigma_m(\lambda)>0,
m = 1, \cdots, k\}.
\]
We now introduce the definition of admissible set of equation \eqref{GJ1.1}.
\begin{definition}
	\label{admissible}

	A $C^2$ regular spacelike hypersurface $M$ is called $(\eta, k)$-convex if the principal curvatures
	$\kappa = (\kappa_1, \ldots, \kappa_n) \in \widetilde{\Gamma}_k$ at each $X \in M$, where $\widetilde{\Gamma}_k$ is the symmetric cone defined by
	\begin{equation}
		\label{GJ1.4}
		\widetilde{\Gamma}_k:= \{\kappa = (\kappa_1, \ldots, \kappa_n) \in \mathbb{R}^n: \lambda(\eta) = (\lambda_1, \cdots, \lambda_n) \in \Gamma_k, \lambda_i(\eta) = \sum_{j \neq i} \kappa_j \}.
	\end{equation}
	
	In addition, we call a $C^2$ spacelike function $u: \Omega \rightarrow \mathbb{R}$ admissible if its graph $M_u$ is $(\eta, k)$-convex, 
	and a function $\varphi \in C^2(\partial\Omega)$ has an admissible extension if its extension $\tilde{\varphi} \in C^2(\ol{\Omega})$ is admissible.
	%Such hypersurface was introduced by Sha \cite{Sha86} and Wu \cite{Wu87} to describe the boundaries of Riemannian manifolds which have the homotopy type of a CW-complex and was studied extensively in \cite{Sha87, HL13}.
\end{definition}

Jiao-Sun \cite{JaoSun22} considered the following degenerate curvature equation
\[
\det (\lambda( \eta[M_u])) = \psi(X, \nu)
\]
in Euclidean space with constant boundary condition. They derived the existence of $C^{1,1}$ regular graphical $(\eta, n)$-convex hypersurfaces for strictly convex $\Omega$. 
Motivated by \cite{JaoSun22}, Chen-Tu-Xiang \cite{CTX25} extended this work to a degenerate Hessian quotient type curvature equation \eqref{GJ1.1} with $0 \leq l < k < n$ and proved the existence and uniqueness of $C^{1,1}$ regular graphical hypersurfaces  in Euclidean space under the condition $\psi^{\frac{1}{k-1}}(x, u(x), \nu(x)) \in C^{1,1}(\Omega \times \mathbb{R} \times S^n)$. 
Building on \cite {CTX25}, 
we generalize their results to Minkowski space and establish the following theorem: 
\begin{theorem}
	\label{GJ-thm1}
	Let $k \geq 2,~ 0 \leq l < k <n$. Assume that $\Omega\subset\mathbb{R}^n$ is bounded domain with smooth boundary $\partial \Omega$. Suppose $\psi = \psi (x,z) \in C^\infty (\ol \Omega \times \mathbb{R}) > 0$, $\psi_z \geq 0$. %\textcolor{green}{and $\varphi$ is affine and spacelike.}  
	In addition, assume that there exists an admissible extension $\overline{\varphi} \in C^{2} (\overline{\Omega})$ of $\varphi$. 
	Then there exists a unique admissible solution $u \in C^{\infty} (\ol \Omega)$ to \eqref{GJ1.3}.
\end{theorem}

Generally speaking, people need some geometric conditions of $\Omega$ as
in \cite{Ivochkina91} to deal with equation \eqref{GJ1.1}. 
In Euclidean space, such convexity-type conditions are typically used to derive subsolutions or construct barriers in second order estimates on the boundary for Hessian equation and curvature equations in the form
$$
F(D^2 u)=f(\lambda(D^2 u))=\psi \;\mbox{and}\; F(\kappa(M))=\psi
$$
respectively. 
For equations like \eqref{GJ1.1}, the correspond condition is: 
there exists a
positive constant $K$ such that for each $x \in \partial\Omega$,
\begin{equation}
	\label{condition10}
	(\kappa_1'(x), \dots, \kappa_{n-1}'(x), K) \in \widetilde{\Gamma}_{k}, 
\end{equation}
where $\kappa_1'(x), \dots, \kappa_{n-1}'(x)$ are the principal curvatures of $\partial\Omega$ at $x$.
We note that for $k<n$, $\tilde{\Gamma}_k$ is the cone of type 2, so condition \eqref{condition10} holds automatically if K is sufficiently large.
%\begin{equation}	\label{condition11}	\sigma_{i}(\lambda(\kappa_{1}', \cdots \kappa_{n-1}', K))=C_{n-1}^{k} K^{k}+\mbox{other terms}(\sim K^{k-1}), \end{equation}if we let $K$ sufficiently large, we have where $\lambda=(\lambda_1,\cdots, \lambda_{n})$ is defined as in \eqref{GJ1.4}. It follows that, for any $K$ sufficiently large, $\sigma_{k}(\lambda(\kappa_{1}', \cdots \kappa_{n-1}', K))$ is always positive and thus, \[(\kappa_1'(x), \dots, \kappa_{n-1}'(x), K) \in \widetilde{\Gamma}_{q}, \;q=1,2,\ldots, n-1. \]In fact, since the positive $\kappa_i$ axes belong to $\tilde{\Gamma}_k$ for $k<n$, it is the cone of type 2 and condition \eqref{condition10} holds automatically.
Therefore, the geometric restrictions of $\Omega$ can be dropped in Theorem~\ref{GJ-thm1}.

However, in the Minkowski context, when we use condition \eqref{condition10} to construct subsolution as \cite{CNSIII}, it seems hard to control the aimed function to be spacelike. 
In fact, if we can construct subsolutions with any affine boundary value $\varphi$ for non-convex $\Omega$, 
then every subsolution is an admissible extension(spacelike, $\kappa(M_\varphi)\in \widetilde{\Gamma}_{k}$) of $\varphi$ on $\overline{\Omega}$.
By Lemma 2.1 in \cite{Bayard03}, $\Omega$ must be convex and this is contraction. 
This strong restriction on the domain does not arise in the Euclidean case.
Bartnik-Simon\cite{Bartnik82} used a barrier discussion when they study the prescribed mean curvature problem. 
By adapting their barriers, we derive the boundary gradient estimates without the subsolution assumption. 
Note that we need to assume $\varphi$ has an admissible extension to $\overline{\Omega}$ to provide a starting point for the continuity method used in the existence theorem, 
and this assumption is weaker but necessary compared with the subsolution condition.

Another unexpected thing of \eqref{GJ1.1} is its algebra structure when $k<n$. 
Generally speaking, we need the following  Serrin-type condition
\[
\frac{\sigma_k}{\sigma_l}(\kappa_1'(y), \dots, \kappa_{n-1}'(y), 0) \geq \psi(y,\varphi(y))\;, \forall y \in \partial\Omega
\]
to ensure the second order boundary estimates is reasonable. 
But for Dirichlet problem of  equation \eqref{GJ1.1},
when the boundary data $\varphi=0$, we found that \eqref{GJ1.1} have the following structure
\[
\psi=\Big(\frac{\sigma_{k}}{\sigma_{l} }\Big) (\lambda(\eta))
= \frac{C_{n-1}^k u_{nn}^k/(w^{3k}) +  O(|u_{nn}|^{k-1})}{C_{n-1}^l u_{nn}^l/(w^{3l}) + O(|u_{nn}|^{l-1})},\; k<n
\]
when $|u_{nn}| \rightarrow \infty$, if the double tangent and maxied tangent-normal estimates have been established. 
We guess that it has similar structure under general boundary data and proceed to derive
\[
	\psi=\Big(\frac{\sigma_{k}}{\sigma_{l} }\Big) (\lambda(\eta))
= \frac{C_{n-1}^k (w^2+|u_n|^2)^{k}u_{nn}^k/(w^{3k}) +  O(|u_{nn}|^{k-1})}{C_{n-1}^l (w^2+|u_n|^2)^{l} u_{nn}^l/(w^{3l}) + O(|u_{nn}|^{l-1})},\; k<n 
\]
with the help of Newton’s Formulas. 
The gradient estimates tells us $w$ has lower positive bound, 
thus the boundary estimates of second order are derived without Serrin-type condition. 
%\begin{theorem}	\label{GJ-thm2}	Let $k \geq 2,~ 0 \leq l < k <n$. Assume that bounded domain $\Omega\subset\mathbb{R}^n$ is $(\eta,k)$-convex with smooth boundary $\partial \Omega$. Suppose $\psi = \psi (x,z) \in C^\infty (\ol \Omega \times \mathbb{R}) > 0$, $\psi_z \geq 0$.  %and   $\varphi$ is affine. 	In addition, assume that there exists an admissible subsolution $\underline{u} \in C^{2} (\overline{\Omega})$ satisfying	$\kappa[M_{\underline{u}}]\in \widetilde{\Gamma}_k$ and	\begin{equation}		\label{sub}		\left\{		\begin{aligned}			&\frac{\sigma_k}{\sigma_l} (\lambda(\eta[M_{\underline{u}}])) \geq \psi(x, \underline{u}), &&in~\Omega,\\			&\underline{u}= 0, &&on~\partial \Omega.		\end{aligned}		\right.	\end{equation}	Then there exists a unique admissible solution $u \in C^{\infty} (\ol \Omega)$ to \eqref{GJ1.3}.\end{theorem}

To conclude the introduction, we briefly review some related studies.
If $\lambda(\eta [M_u])$ is replaced by the principal curvature $\kappa$ of $M_u$ and $l=0$, equation \eqref{GJ1.1} becomes the classical $k$-curvature equation \eqref{GJ1.2}. The mean curvature, scalar curvature and Gauss curvature equations correspond to $k = 1, 2,$ and $n$ in the formula \eqref{GJ1.2}, respectively.

The Dirichlet problem of the prescribed mean curvature equation in Minkowski space was first solved by Bartnik-Simon \cite{Bartnik82}, while Delanoè \cite{Delano90} studied the Dirichlet problem of the prescribed Gauss curvature equation (see also \cite{Guan98}). Bayard \cite{Bayard03} solved the Dirichlet problem for the prescribed scalar curvature equation in four dimension under the strict convexity condition, and Urbas \cite{Urbas03} extended this result to arbitrary dimensions.
Ren-Wang \cite{RW19, RW20} solved the case $k=n-1$ and $k=n-2$. For $3\leq k \leq n-3$, the problem remains open, due to the lack of \emph{a priori} $C^2$ estimates. 
%For this problem, Guan-Ren-Wang \cite{GRW15} given a second order estimates for admissible solutions of $k$-curvature equations \eqref{GJ1.2} whose graphs are $(k+1)$-convex. 
Wang \cite{WangZZ} obtained the curvature estimates for $(k+1)$-convex solutions of  $k$-curvature equations \eqref{GJ1.2} in Minkowski space,  
while Wang \cite{WangB25} established the estimates under semi-convex condition.

%In [Guan-Ren-Wang CPAM 2015] and [Li-Ren-Wang JFA 2016], they did the job in the Euclidean space. 

For the corresponding Hessian quotient type equations
 $$\frac{\sigma_{k}(\lambda (\Delta u I - D^2 u))}{\sigma_{l}(\lambda (\Delta u I - D^2 u))} = f(x, u(x), Du(x)),$$ 
 if $f(x, u(x), Du(x))=f(x)$ and $k<n$, it is included in \cite{CNSIII}. 
Liu-Mao-Zhao \cite{LMZ22} established Pogorelov type estimates of $k$-convex solutions with homogeneous boundary data in Lorentz-Minkowski space $\mathbb{R}^{n+1}$. 
Chen-Tu-Xiang \cite{CTX23.} considered the Dirichlet problem on compact Riemannian manifolds and obtained the existence result.

%In this paper, we establish a Pogorelov type estimate for second order derivatives (Theorem \ref{gj-thm1}) where we have used a method from\cite{Guan14} and \cite{Urbas02} to deal with bad third order terms. When the boundary data $\varphi$ is not constant, the estimates for double normal derivatives on the boundary become much more complicated. We shall use an idea of  \cite{Ivochkina91} and \cite{ILT96} to overcome these difficulties.

The rest of this paper is organized as follows. In Section 2, we provide some preliminaries. Sub- and super-barriers are constructed in Section 3. The 
$C^1$
estimate is established in Section 4. Boundary estimates for second-order derivatives are addressed for homogeneous and general boundary data in Sections 5 and 6, respectively. Finally, the proof of Theorem~\ref{GJ-thm1} is completed in Section 7.

\section{Preliminaries}

Let $\epsilon_1, \cdots, \epsilon_{n+1}$ be an orthonormal basis of $\mathbb{R}^{n+1}$. Consider a spacelike function $u$ and let $M_u$
represent its graph, which can be written as 
\[
M_u = \{X = (x, u(x)) | x \in \mathbb{R}^n\}.
\]
Then $u(x)= - \langle X, \epsilon_{n+1}\rangle$, where $\langle \cdot,\cdot \rangle$ is the Minkowski inner product.

On the other hand, when we choose local orthonormal frames $\{e_1,e_2,\cdots,e_n\}$ on $TM_u$. $\nabla$ denotes the induced Levi-Civita connection on $M$. For a smooth function $v$ on $M_u$, we denote $\nabla_i v=\nabla_{e_i}v,$
$\nabla_{ij} v = \nabla^2 v (e_i, e_j),$ etc in this paper.
Thus, we have
\[|\nabla u|=\sqrt{g^{ij}u_{i}u_{j}}=\frac{|Du|}{\sqrt{1-|Du|^2}}.\]
Under this normal coordinates $\{e_1,e_2,\cdots,e_n\}$,
we also have the following fundamental formulae and equations for the hypersurface $M$ in Minkowski space $\mathbb{R}^{n, 1}$:
\begin{equation}\label{Gauss}
	\begin{aligned}
		\nabla_{ij} X = \,& h_{ij}\nu \quad {\rm (Gauss\ formula)}\\
		\nabla_i \nu= \,& h_{ij} e_j \quad {\rm (Weigarten\ formula)}\\
		\nabla_k h_{ij} = \,& \nabla_j h_{ik} \quad {\rm (Codazzi\ equation)}\\
		R_{ijst} = \,& -(h_{is}h_{jt}-h_{it}h_{js})\quad {\rm (Gauss\ equation)},
	\end{aligned}
\end{equation}
where $h_{ij} = \langle D_{e_i}\nu,  e_j\rangle$ is the second fundamental form of $M$.

Define 
\begin{equation}
	\label{matrix}
      A = \{ a_{ij} \}=\Big\{  \frac{1}{w} \gamma^{ik} u_{kl} \gamma^{lj}  \Big\},
\end{equation}
where $w=\sqrt{1-|Du|^2}$ and $\gamma^{ik} =\delta_{ik}+\frac{u_iu_k}{w(1+w)}.$ 
Note that $\{\gamma^{ij}\}$ is invertible and its inverse is $\{\gamma_{ij}\}=\{\delta_{ij}-\frac{u_iu_j}{1+w}\}$, which is the square root of $\{g_{ij}\}.$

Clearly, $A = \{ a_{ij} \}$ is symmetric and the eigenvalues of $A$ equal the prescribed principal curvatures of $M_u$, which means
\begin{equation}
	\label{curvature}
	\kappa(\{a_{ij}\}) = \kappa(\{g^{ij}\} \{h_{ij}\}) = \kappa \big(\frac{1}{w} \left(I + \frac{Du \otimes Du}{w^2}\right) D^2 u \big).
\end{equation}

Note that the equation \eqref{GJ1.1} can be rewritten by 
\[
f(\kappa) := \frac{\sigma_k}{\sigma_l}(\lambda(\eta[M_u])) = \psi,
\]
where $\kappa = (\kappa_1, \ldots, \kappa_n)$ are the principal curvatures of $M_u$.
We need the following properties of $f (\kappa)$ (seeing \cite{JL20}). For any constant $A > 0$ and any compact set $K$ in $\widetilde{\Gamma}_k$
there is a number $R = R (A, K)$ such that
\begin{equation}
	\label{cj-2}
	f (\kappa_1, \ldots, \kappa_{n-1}, \kappa_n + R) \geq A, \mbox{ for all } \kappa \in K.
\end{equation}

In the final part of this section, we list some algebraic equalities and inequalities of $\sigma_k$. 
Denote $\sigma_{m;i_1,\cdots,i_k}(\kappa)=\sigma_m(\kappa)|_{\kappa_{i_{1}}=\cdots=\kappa_{i_{k}}=0}$ for integer
$1\leq i_1,\cdots,i_k\leq n, 1\leq m\leq n$
and
$n-k\leq m$.
Then we have, 
%More details can see \cite{CNS85, CDH23, Ger06, Hui99, Li96, LTR94, S05}.
\[
\frac{\partial\sigma_{k}}{\partial\kappa_{i}}(\kappa)=\sigma_{k-1;i}(\kappa),
\]
\[
\sum_i \sigma_{k-1; i}(\kappa) = (n-k+1) \sigma_{k-1}(\kappa)\]
and
\[
\sum_i \sigma_{k-1; i} (\kappa) \kappa_i = k \sigma_k (\kappa).
\]

The generalized Newton-MacLaurin inequality is as follows.
\begin{proposition}
	\label{NM.}
	For $\lambda \in \Gamma_k$ and $n \geq k > l \geq 0$, $n \geq r > s \geq 0$, $k \geq r$, $l \geq s$, we have
	\begin{align}
		\Bigg[\frac{{\sigma _k (\lambda )}/{C_n^k }}{{\sigma _l (\lambda
				)}/{C_n^l }}\Bigg]^{\frac{1}{k-l}} \le \Bigg[\frac{{\sigma _r
				(\lambda )}/{C_n^r }}{{\sigma _s (\lambda )}/{C_n^s
		}}\Bigg]^{\frac{1}{r-s}}. \notag
	\end{align}
\end{proposition}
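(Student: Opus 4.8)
The plan is to reduce everything to Newton's inequalities together with an elementary averaging fact about geometric means over intervals of integers. Write $p_m := \sigma_m(\kappa)/C_n^m$ for $0\le m\le k$; since $\kappa\in\Gamma_k$, all of $p_0=1,p_1,\ldots,p_k$ are strictly positive. The first step is to invoke Newton's inequality $p_{m-1}p_{m+1}\le p_m^2$ for $1\le m\le k-1$, which holds for any real $n$-tuple $\kappa$ (a classical consequence of the fact that $\prod_i(t+\kappa_i)$ has only real roots, hence so do all of its derivatives; I would simply cite this). Setting $q_m:=p_m/p_{m-1}>0$ for $1\le m\le k$, Newton's inequality is exactly the statement that $q_1\ge q_2\ge\cdots\ge q_k$.

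The second step is to observe that both sides of the claimed inequality are geometric means of consecutive $q_m$'s. Indeed $p_k/p_l=\prod_{m=l+1}^{k}q_m$, so $\big[(\sigma_k/C_n^k)/(\sigma_l/C_n^l)\big]^{1/(k-l)}=\big(\prod_{m=l+1}^{k}q_m\big)^{1/(k-l)}$ is the geometric mean of $\{q_m: l+1\le m\le k\}$, and likewise $\big[(\sigma_r/C_n^r)/(\sigma_s/C_n^s)\big]^{1/(r-s)}$ is the geometric mean of $\{q_m: s+1\le m\le r\}$. The hypotheses $k\ge r$ and $l\ge s$ say precisely that the index interval $[l+1,k]$ lies to the right of $[s+1,r]$, in the sense that it has both a larger left endpoint ($l+1\ge s+1$) and a larger right endpoint ($k\ge r$); moreover both intervals lie inside $[1,k]$, where the sequence $q_\bullet$ is defined and non-increasing.

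The third step is the lemma that finishes the proof: if $\phi$ is a non-increasing real sequence and $[a_1,b_1]$, $[a_2,b_2]$ are integer intervals with $a_1\le a_2$ and $b_1\le b_2$, then the average of $\phi$ over $[a_2,b_2]$ is at most its average over $[a_1,b_1]$. I would prove this by splitting into the disjoint case ($b_1<a_2$) and the overlapping case ($a_2\le b_1$): in each case one compares both averages to the average of $\phi$ over the common (or separating) block, using that to the left of that block $\phi$ sits above the block's maximum while to the right it sits below the block's minimum. Applying the lemma with $\phi=\log q_\bullet$, $[a_1,b_1]=[s+1,r]$ and $[a_2,b_2]=[l+1,k]$, and exponentiating, gives exactly $\big[(\sigma_k/C_n^k)/(\sigma_l/C_n^l)\big]^{1/(k-l)}\le\big[(\sigma_r/C_n^r)/(\sigma_s/C_n^s)\big]^{1/(r-s)}$.

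I expect the only real subtlety to be the bookkeeping in the averaging lemma — in particular keeping track of which intervals may be empty or disjoint under the hypotheses $k>l$, $r>s$, $k\ge r$, $l\ge s$ (note that $[s+1,r]$ and $[l+1,k]$ can be disjoint, e.g. when $r<l+1$) — and making sure the division defining $q_m$ is legitimate, which is exactly where admissibility $\kappa\in\Gamma_k$ (positivity of $\sigma_1,\ldots,\sigma_k$) is used; Newton's inequality itself I would not reprove.
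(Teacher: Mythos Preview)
Your argument is correct and is the standard route to the generalized Newton--MacLaurin inequality: Newton's inequalities give the monotonicity $q_1\ge\cdots\ge q_k$ of the ratios $q_m=p_m/p_{m-1}$, and the averaging lemma (cleanly handled by shifting one endpoint at a time, which keeps the interval nonempty since $a_2\le b_2$) then compares the two geometric means. The paper does not actually supply a proof of this proposition---it simply refers the reader to Spruck's lecture notes \cite{S05}---so you have provided more than the paper does.
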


\begin{proof}
	See \cite{S05}.
\end{proof}

Next, we need the following Newton's Formulas to connect a matrix $A$ and  $\sigma_{k}(\lambda(A))$, where $\lambda = (\lambda_1, \cdots,\lambda_n)$ is the eigenvalue vector of $A$.
% and the corresponding $\sigma_{k}$ whose variables are the eigenvalues of this matrix .
\begin{lemma}[Newton's Formulas]\label{nt}
	Define \( \sigma_k = 0 \) for \( k > n \).  
	Let 
	\[
	p_k = \lambda_1^k + \lambda_2^k + \cdots + \lambda_n^k, \qquad k \ge 0,
	\]
	be the sum of the \( k \)-th powers of $\{\lambda_{i}\}$.  
	
	Then the following identities hold for all \( k \ge 1 \):
	\[
	p_k - \sigma_1 p_{k-1} + \sigma_2 p_{k-2} - \cdots + (-1)^{k-1} \sigma_{k-1} p_1 + (-1)^k k \sigma_k = 0.
	\]
\end{lemma}

Finally, we list some important properties cone $\widetilde{\Gamma}_k$ defined by \eqref{GJ1.4}, more details can refer to \cite{CDH23}.
%As in \cite{CDH23}, we will introduce a new cone $\widetilde{\Gamma}_k$ defined by \eqref{wid} and list some important properties.
\begin{proposition}
	The following properties hold.
	
	(1) $\widetilde{\Gamma}_k$ are convex cones and
	\begin{equation}\label{px}
		\Gamma_1 = \widetilde{\Gamma}_1 \supset \widetilde{\Gamma}_2 \supset \cdots \supset \widetilde{\Gamma}_n \supset \Gamma_2 \supset \Gamma_3 \cdots \supset \Gamma_n.
	\end{equation}
	
%	(2) If $\kappa = (\kappa_1, \cdots, \kappa_n) \in \widetilde{\Gamma}_k$, then
%	$$
%	\frac{\partial \left[ \frac{\sigma_k (\eta)} {\sigma_l (\eta)} \right]} {\partial \kappa_i} \geq \frac{n (k - l)}{k (n-l)} \sum_{p \neq i} \frac{\sigma_{k-1; p} (\eta) \sigma_{l; p} (\eta)} {\sigma_l (\eta)^2},
%	$$
%	for any $i = 1, 2, \cdots, n$, where $0 \leq l < k \leq n$.
	
	(2) If $\kappa = (\kappa_1, \cdots, \kappa_n) \in \widetilde{\Gamma}_k$, then $\left[\frac{\sigma_k (\eta)}{\sigma_l (\eta)}\right]^{\frac{1}{k-l}} (0\leq l<k\leq n)$ are concave with respect to $\kappa$. Hence for any $(\xi_1,\cdots,\xi_n)$, we have
	\begin{equation}
		\label{pro3}
		\sum_{i, j} \frac{\partial^2 \left[\frac{\sigma_k (\eta)}{\sigma_l (\eta)}\right]} {\partial \kappa_i \partial  \kappa_j} \xi_i \xi_j 
		\leq 
		\left(1 - \frac{1}{k-l} \right) 
		\frac{\left[ \sum_i \frac{\partial \left(\frac{\sigma_k (\eta)}{\sigma_l (\eta)} \right)} {\partial \kappa_i}\xi_i\right]^2}{\frac{\sigma_k(\eta)}{\sigma_l(\eta)}}.
	\end{equation}
\end{proposition}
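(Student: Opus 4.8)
The plan is to recognize $\widetilde{\Gamma}_k$ as the preimage of the standard cone $\Gamma_k$ under a fixed linear isomorphism of $\mathbb{R}^n$, and then simply transport the convexity of $\Gamma_k$ and the classical concavity of $(\sigma_k/\sigma_l)^{1/(k-l)}$ through that map. Concretely, since the mean curvature of $M_u$ is $H=\sigma_1(\kappa)$, the eigenvalues of $\eta=Hg-h$ with respect to $g$ are $\lambda_i(\eta)=\sigma_1(\kappa)-\kappa_i=\sum_{j\neq i}\kappa_j$, so $\lambda(\eta)=L\kappa$ where $L\colon\mathbb{R}^n\to\mathbb{R}^n$ is the linear map $(L\kappa)_i=\sum_j\kappa_j-\kappa_i$. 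One checks $L$ has eigenvalue $n-1$ on the line $\mathbb{R}(1,\dots,1)$ and eigenvalue $-1$ on its orthogonal complement $\{\sum_j\kappa_j=0\}$, so $L$ is a linear isomorphism; by \eqref{GJ1.4} we then have $\widetilde{\Gamma}_k=L^{-1}(\Gamma_k)$.

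For part (1): the preimage of the open convex cone $\Gamma_k$ under the linear map $L$ is again an open convex cone, so each $\widetilde{\Gamma}_k$ is an open convex cone. Applying $L^{-1}$ to $\Gamma_1\supset\Gamma_2\supset\cdots\supset\Gamma_n$ gives $\widetilde{\Gamma}_1\supset\widetilde{\Gamma}_2\supset\cdots\supset\widetilde{\Gamma}_n$. For the two extreme relations I would compute directly. First, $\sigma_1(L\kappa)=\sum_i(\sigma_1(\kappa)-\kappa_i)=(n-1)\sigma_1(\kappa)$, so $L\kappa\in\Gamma_1\iff\kappa\in\Gamma_1$, i.e.\ $\widetilde{\Gamma}_1=\Gamma_1$. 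Second, recalling that $\Gamma_n$ coincides with the open positive orthant: if $\kappa\in\Gamma_2$ then $2\sigma_2(\kappa)=\sigma_1(\kappa)^2-\sum_i\kappa_i^2>0$ forces $|\kappa_i|<\sigma_1(\kappa)$ for every $i$, hence $(L\kappa)_i=\sigma_1(\kappa)-\kappa_i>0$ for all $i$; thus $L\kappa\in\Gamma_n$ and $\kappa\in\widetilde{\Gamma}_n$, which proves $\Gamma_2\subset\widetilde{\Gamma}_n$ and hence the chain \eqref{px}.

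For part (2): it is classical (see e.g.\ \cite{CDH23,S05}) that $G(\lambda):=\big[\sigma_k(\lambda)/\sigma_l(\lambda)\big]^{1/(k-l)}$ is concave on $\Gamma_k$. Since $L$ is linear, $F(\kappa):=G(L\kappa)=\big[\sigma_k(\eta)/\sigma_l(\eta)\big]^{1/(k-l)}$ is concave on $\widetilde{\Gamma}_k=L^{-1}(\Gamma_k)$, which is the first claim of (2). To deduce \eqref{pro3}, set $m=k-l$, $Q=F$ and $P=Q^m=\sigma_k(\eta)/\sigma_l(\eta)>0$, all regarded as functions of $\kappa$. Differentiating $P=Q^m$ twice in $\kappa$ and contracting with $\xi$ gives
\[
\sum_{i,j}P_{\kappa_i\kappa_j}\xi_i\xi_j=m(m-1)Q^{m-2}\Big(\sum_i Q_{\kappa_i}\xi_i\Big)^2+mQ^{m-1}\sum_{i,j}Q_{\kappa_i\kappa_j}\xi_i\xi_j,
\]
and since $\sum_{i,j}Q_{\kappa_i\kappa_j}\xi_i\xi_j\leq0$ by concavity of $Q$ while $mQ^{m-1}>0$, the last term may be dropped; substituting $Q_{\kappa_i}=\tfrac{1}{m}Q^{1-m}P_{\kappa_i}$ and using $Q^{-m}=P^{-1}$ turns the remaining term into $\big(1-\tfrac{1}{m}\big)\big(\sum_i P_{\kappa_i}\xi_i\big)^2/P$, which is precisely \eqref{pro3}.

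None of this is deep once the linear change of variables $L$ is in hand; the only genuinely nontrivial ingredient is the classical concavity of $(\sigma_k/\sigma_l)^{1/(k-l)}$ on $\Gamma_k$, which I would quote rather than reprove, and the single elementary point that still needs a line of argument is the inclusion $\Gamma_2\subset\widetilde{\Gamma}_n$, resting on the bound $|\kappa_i|<\sigma_1(\kappa)$ valid throughout $\Gamma_2$.
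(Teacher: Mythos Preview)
Your argument is correct. The paper itself does not give a proof of this proposition but simply refers the reader to \cite{CDH23}; your self-contained treatment via the linear isomorphism $L\kappa=(\sigma_1(\kappa)-\kappa_i)_i$, together with the classical concavity of $(\sigma_k/\sigma_l)^{1/(k-l)}$ on $\Gamma_k$ and the elementary computation $|\kappa_i|<\sigma_1(\kappa)$ on $\Gamma_2$, is exactly the standard route and supplies the details the paper omits.
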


%For $\lambda=(\lambda_1,\cdots,\lambda_n)$, recall the notation $\eta=(\eta_1,\cdots,\eta_n)$ that $\eta_i=\sum_{j\neq i}\lambda_j$. Then we have the following basic properties. The proof can refer to Lemma 2.7 of \cite{CDH23}.
\begin{proposition}
	Suppose that $\kappa = (\kappa_1, \cdots, \kappa_n) \in \widetilde{\Gamma}_k$ are ordered with $\kappa_1 \geq \kappa_2 \geq \cdots \geq \kappa_n$, then
%	(1) $\lambda_1 \leq \lambda_2 \leq \cdots \leq \lambda_n$ and $\lambda_{n-k+1} > 0$.
%	(2) $\sigma_{k-1; n-k+1}(\eta) \geq c(n,k) \sigma_{k-1}(\eta), 	~\mbox{for}~0\leq l < k\leq n$.	(3) $\frac{\partial \left[\frac{\sigma_k (\eta)} {\sigma_l (\eta)} \right]} {\partial \lambda_1} \geq	\frac{\partial \left[\frac{\sigma_k (\eta)} {\sigma_l (\eta)} \right]} {\partial \lambda_2} \geq \cdots \geq	\frac{\partial \left[\frac{\sigma_k (\eta)} {\sigma_l (\eta)} \right]} {\partial \lambda_n},	~\mbox{for}~0\leq l<k\leq n$.	(4) $\frac{\partial \left[\frac{\sigma_k (\eta)} {\sigma_l (\eta)} \right]} {\partial \kappa_1} \leq	\frac{\partial \left[\frac{\sigma_k (\eta)} {\sigma_l (\eta)} \right]} {\partial \kappa_2} \leq \cdots \leq	\frac{\partial \left[\frac{\sigma_k (\eta)} {\sigma_l (\eta)} \right]} {\partial \kappa_n},	~\mbox{for}~0\leq l<k\leq n$.	
for $0 \leq l < k < n$, we have	
\begin{equation}		
	\label{ile}		
	\frac{\partial \left[\frac{\sigma_k (\eta)} {\sigma_l (\eta)} \right]} {\partial \kappa_i} 		\geq c(n,k,l) \sum_i \frac{\partial \left[\frac{\sigma_k (\eta)} {\sigma_l (\eta)} \right]} {\partial \kappa_i}		\geq c(n,k,l)\left[\frac{\sigma_k (\eta)}{\sigma_l (\eta)}\right]^ {1-\frac{1}{k-l}},		~\forall ~ 1 \leq i \leq n.
	\end{equation}
\end{proposition}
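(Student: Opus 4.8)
The plan is to reduce both inequalities, via the chain rule, to statements about the derivatives $G_i:=\frac{\partial}{\partial\mu_i}\frac{\sigma_k(\mu)}{\sigma_l(\mu)}$ of the quotient evaluated at $\mu:=\lambda(\eta)$, and then to extract the lower bounds from the concavity recorded in \eqref{pro3} together with the scaling of the equation. Write $F(\kappa):=\frac{\sigma_k(\eta)}{\sigma_l(\eta)}$. Since $\mu_i=\lambda_i(\eta)=\sum_{j\neq i}\kappa_j=\sigma_1(\kappa)-\kappa_i$, the map $\kappa\mapsto\mu$ is linear with $\partial\mu_i/\partial\kappa_j=1-\delta_{ij}$, so the chain rule gives
\[
\frac{\partial F}{\partial\kappa_i}=\sum_{j\neq i}G_j=\Big(\sum_jG_j\Big)-G_i,
\qquad
\sum_i\frac{\partial F}{\partial\kappa_i}=(n-1)\sum_jG_j,
\]
and every $G_j>0$ because $\mu\in\Gamma_k$. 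Thus the first inequality of \eqref{ile} is equivalent to $\max_iG_i\le(1-\delta)\sum_jG_j$ for a suitable $\delta=\delta(n,k,l)>0$, and the second to a lower bound for $\sum_jG_j$.

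For the second inequality I would use that $\widetilde f:=F^{1/(k-l)}$ is homogeneous of degree $1$ in $\kappa$ (as $\eta$ is linear in $\kappa$), so Euler's identity gives $\sum_i\kappa_i\partial_{\kappa_i}\widetilde f=\widetilde f$, and that $\mathbf 1=(1,\dots,1)\in\widetilde\Gamma_k$ — since $\lambda(\eta(\mathbf 1))=(n-1,\dots,n-1)\in\Gamma_k$ — with $\widetilde f(\mathbf 1)=(n-1)\big(\binom nk/\binom nl\big)^{1/(k-l)}=:c_0(n,k,l)>0$. Because \eqref{pro3} is precisely the concavity of $\widetilde f$ on the convex cone $\widetilde\Gamma_k$, the first-order concavity inequality of $\widetilde f$ at $\kappa$ evaluated at $\mathbf 1$, combined with Euler, yields
\[
c_0=\widetilde f(\mathbf 1)\le \widetilde f(\kappa)+\sum_i\partial_{\kappa_i}\widetilde f(\kappa)\,(1-\kappa_i)=\sum_i\partial_{\kappa_i}\widetilde f(\kappa);
\]
multiplying by $(k-l)F^{1-1/(k-l)}$ and using $\partial_{\kappa_i}\widetilde f=\tfrac1{k-l}F^{\frac1{k-l}-1}\partial_{\kappa_i}F$ turns this into $\sum_i\partial_{\kappa_i}F\ge(k-l)c_0\,F^{1-1/(k-l)}$, the second inequality of \eqref{ile}.

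For the first inequality it remains to prove $\max_iG_i\le(1-\delta)\sum_jG_j$. I would order $\mu_1\le\cdots\le\mu_n$ (equivalently $\kappa_1\ge\cdots\ge\kappa_n$); from the identity $G_p-G_q=(\mu_q-\mu_p)\,\sigma_l^{-2}\big(\sigma_{k-2;pq}\sigma_l-\sigma_k\sigma_{l-2;pq}\big)$ and Proposition \ref{NM.} applied to $(\mu_i)_{i\neq p,q}\in\Gamma_{k-2}$ one gets $G_1\ge\cdots\ge G_n>0$, so it suffices to bound $G_1\le C(n,k,l)\,G_2$. This should come out by expanding $G_1,G_2$ in $\mu_1,\mu_2$ over $(\mu_3,\dots,\mu_n)\in\Gamma_{k-2}$ and using that the ordering forces $\mu_2\le\mu_3\le\cdots\le\mu_n$, so $\mu_2$ never exceeds the smallest of the remaining eigenvalues and the relevant mixed $\sigma_m$-ratios are bounded by Newton--MacLaurin in terms of $n,k,l$ only; then $\sum_jG_j\ge G_1+G_2\ge(1+C^{-1})G_1$ gives $\delta=(C+1)^{-1}$. (Alternatively this comparability on the ordered cone $\Gamma_k$ can be quoted from \cite{CDH23}.) Finally one takes $c(n,k,l):=\min\{\delta/(n-1),(k-l)c_0\}$ in \eqref{ile}.

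I expect the comparability $G_1\le C(n,k,l)\,G_2$ of the two largest eigenvalue-derivatives of $\sigma_k/\sigma_l$ to be the main obstacle; everything else is the chain rule together with the already-recorded concavity and homogeneity. Conceptually it says that in $\widetilde\Gamma_k$ no single direction $\partial_{\kappa_i}$ can carry almost all of $\sum_i\partial_{\kappa_i}$, precisely because the ordering prevents the second-smallest $\eta$-eigenvalue from dominating the bulk of the others.
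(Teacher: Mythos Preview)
The paper itself does not prove this proposition; it simply states it and refers the reader to \cite{CDH23} for details. So there is no ``paper's own proof'' to compare against beyond that citation.

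Your argument is sound and in fact supplies more than the paper does. The chain-rule reduction to the $\mu$-variables is correct, and your proof of the second inequality via concavity of $F^{1/(k-l)}$, Euler's identity, and evaluation at $\mathbf 1\in\widetilde\Gamma_k$ is clean and complete. The reduction of the first inequality to $\max_i G_i\le(1-\delta)\sum_j G_j$, and then to the comparability $G_1\le C(n,k,l)\,G_2$ of the two largest $\mu$-derivatives on $\Gamma_k$, is also correct.

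The only soft spot is the one you already flag: the sketch for $G_1\le C\,G_2$ (``expand in $\mu_1,\mu_2$ and use Newton--MacLaurin on the rest'') is plausible but not a proof as written. In particular, $(\mu_3,\dots,\mu_n)$ is only known to lie in $\Gamma_{k-2}$, so $\sigma_{k-1;12}$ need not be positive and the bookkeeping has to go through the constraint $\sigma_k(\mu)>0$ rather than through Newton--MacLaurin alone. This is exactly the lemma established in \cite{CDH23}; since the paper itself relies on that reference for the whole proposition, quoting it here (as you suggest as an alternative) is entirely in line with the paper's treatment.
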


%Let $\mathcal{S}$ be the vector of $n\times n$ symmetric matrices and we extend the definition of $\Gamma_k$ into the matrices,
%\[\Gamma_k=\{A\in \mathcal{S}: \lambda(A)\in \Gamma_k\},\]
%where $\lambda(A)=(\lambda_1, \cdots, \lambda_n)$ denotes the eigenvalues of $A.$
%The equation \eqref{1-1} can be written as
%\begin{equation}\label{main equation graph}
%\sigma_k\Big(\frac{1}{w}\gamma^{ik}u_{kl}\gamma^{lj}\Big)=f(x,u).
%\end{equation}
%For $\kappa \in \Gamma$, let
%\begin{equation}
%\label{lambda}
%\lambda_i := \sum_{j \neq i} \kappa_j, i = 1, \ldots, n
%\end{equation}
%and
%\begin{equation}
%\label{def-h}
%f (\kappa) := \lambda_1 \cdots \lambda_n.
%\end{equation}
%Thus, we find
%\[
%K_\eta [M_u] = f (\kappa),
%\]

\section{Barrier constructions}

In this section, we shall construct barrier functions. 
Since $\Omega$ is a bounded domain, $u=\varphi$ on $\partial \Omega$ and $u$ is assumed to be spacelike, we have $|u|<C$ in $\ol\Omega$. 

First, we use the solvability of the prescribed mean curvature problem \cite{Bartnik82} in
order to construct an upper barrier at the boundary. Let $\overline{u}$ be the spacelike solution of the
Dirichlet problem
\[
\begin{cases}
	\sigma_1(\kappa[M_{\ol u}])) = \inf_{\Omega\times [\inf_{\ol{\Omega}}u, \sup_{\ol{\Omega}}u]} \psi>0 & \text{in } \Omega, \\[4pt]
	\overline{u} = \varphi & \text{on } \partial\Omega .
\end{cases}
\]
The comparison principle for the mean curvature operator gives $u \le \overline{u}$ in $\Omega$ and thus we get the upper barrier.

For the lower barrier, 
in \cite{Bartnik82}, Bartnik-Simon use the following barrier
\[
w^{\pm}(x)=w^{\pm}(\xi)\pm \int_{0}^{|x-\xi|} \frac{K-\frac{1}{n}\Lambda t^{n}}{\sqrt{t^{2n-2}+(K-\frac{1}{n}\Lambda t^{n})^{2}}} dt
\]
to derive the boundary gradient estimates in the prescribed mean curvature case. 
Here $K$ is a large constant to be determined, $\Lambda$ is the mean curvature of $w^{\pm}(x)$ and $|Dw^{\pm}|<1$. 
Note that $|Dw^{\pm}|$ could be close to $1$ by the choose of $K$, so it's a good choice of barrier. 
In fact, for a fixed point $x_{0}\in \partial \Omega$, choosing another point $\xi$ appropriately satisfying $|x_{0}-\xi|$ small, then
\[
v^{\pm}(x) := w^{\pm}(x)-w^{\pm}(x_{0})+\varphi(x_{0})
\]
are upper and lower barriers at $x_{0}$, they satisfy 
\[
	v^+(x) > \varphi(x), v^-(x)< \varphi(x), \forall x \in \partial\Omega\backslash \{x_{0}\}
\]
and 
\[
	v^{\pm}(x_0) = \varphi(x_0), \mathcal{M}(v^{\pm}) = \mp \Lambda.
\]
Since $\xi$ is out of $\overline{\Omega}$, $v^{\pm}(x)$ are smooth on $\overline{\Omega}$.
It follows from the standard comparison principle that $|Du|\leq 1-\delta<1$, where $\delta$ is relevant to $\varphi$.

We shall adapt their barriers and show they satisfy our requirement. For spherically symmetric function $w(r)$, consider the spherical coordinates with basis
\[
(\partial_{t},\partial_{r}, \partial_{\theta_{1}}, \ldots, \partial_{\theta_{n-1}})
\]
and
metric of $\mathbb{R}^{n,1}$
\[
ds^2=-dt^2+dr^2+r^2d\Omega, 
\]
where $d\Omega$ is the standard metric of $n-1$ dimension unit sphere. 
Write $d\Omega=s_{ij} d\theta_{i}d\theta_{j}(i,j=1,\ldots,n-1)$,  the non-zero Christoffel symbol are
\[
\Gamma_{ij}^{r}:=\Gamma_{\theta_{i}\theta_{j}}^{r}=-rs_{ij}. 
\]

The graph of $w$ is $(r,\theta_{1}, \ldots, \theta_{n-1},w(r))$. 
The corresponding tangent vectors are
\[
\tau_{r}=\partial_r+w'\partial_t, \tau_{\theta_i}=\partial_{\theta_i},\; i=1,\ldots,n-1.
\]
Consider the normalization tangent vectors and normal
\[
e_{r}=\frac{1}{\sqrt{1-(w')^2}}(1,0,\ldots,0,w'), e_{i}=\frac{1}{r\sqrt{s_{ii}}}\partial_{\theta_{i}}, i=1,2,\ldots, n-1, 
\]
\[
\vec{n}=\frac{1}{\sqrt{1-(w')^2}}(w',0,\ldots,0,1).
\]
Write $W=\sqrt{1-(w')^2}$, by direct calculation, we have 
\[
h_{ij}=(D_{e_{i}}e_{j},\vec{n})=\frac{w'}{rW}; h_{ri}=0;h_{rr}=\frac{w''}{W^{3}}. 
\]
Thus the principle curvatures are $(\frac{w'}{rW}, \cdots , \frac{w'}{rW}, \frac{w''}{W^3})$. 

Choosing 
\[
w(x)=w(\xi)- \int_{0}^{|x-\xi|} \frac{K-\frac{1}{m+1}t^{m+1}}{\sqrt{t^{2m}+(K-\frac{1}{m+1}t^{m+1})^2}} dt,
\]
then the principle curvatures of $M_{w(x)}$: $$\tilde{\kappa}:=(\tilde{\kappa}_1,\cdots,\tilde{\kappa}_n) = (-\frac{K}{r^{m+1}}+\frac{1}{m+1}, \cdots , -\frac{K}{r^{m+1}}+\frac{1}{m+1}, \frac{mK}{r^{m+1}}+\frac{1}{m+1}),$$
where $m, K$ are undetermined positive constants.
Note  that $\widetilde{\Gamma}_k$ is of type 2, fix $m$ large enough, we have 
\[
(-1,\cdots,-1,m)\in \widetilde{\Gamma}_k,
\]
which implies
\[
\tilde{\kappa}\in \widetilde{\Gamma}_k
\]
since $(\frac{1}{m+1}, \cdots , \frac{1}{m+1}, \frac{1}{m+1})\in \widetilde{\Gamma}_{k}$.
By \eqref{ile}, for the above $m$, choose $K\geq \sup_{\ol{\Omega}} r^{m+1}$ large enough, we have 
\[
\frac{\sigma_k}{\sigma_l}(\tilde{\eta}) > \sup_{\ol{\Omega}\times [ \inf_{\ol{\Omega}}u \sup_{\ol{\Omega}}u]} \psi,
\]
where $\tilde{\eta}_i = \sum_{j \neq i} \tilde{\kappa}_j$.

By the exact same discussion in the Appendix of \cite{Bartnik82}, we can show 
\begin{equation}
	\label{lower}
	\ul u := w(x)-w(x_{0})+\varphi(x_{0})
\end{equation}
is enough for a spacelike lower barrier at $x_{0}$.

\section{$C^1$ global estimates}

Next, we establish the $C^1$ estimates for the admissible solution to \eqref{GJ1.3}. Simliar as \cite{GJ25}, we prove the admissible solution $u$ is spacelike, which means there exists a constant $0 < \theta_0 < 1$ such that
\begin{equation}
	\label{spacelike}
	\sup_{\ol \Omega} |Du|\leq 1-\theta_0.
\end{equation}

This global gradient estimates will give an upper bound of $\tilde{w}$, which ensures the uniform ellipticity of $\frac{\sigma_{k}}{\sigma_{l}}(\lambda(\eta))$ if $||u||_{C^2(\overline{\Omega})}<C$.

%Since $M_u$ is $(\eta,k)$-convex, we have $\sigma_1(\lambda(\eta[M_u])) > 0$, which means $\sigma_{1}(\kappa(M_{u}))=H[M_u]>0$. 
%Consider the Dirichlet problem
%\begin{equation	\label{super}	\left\{ \begin{aligned}		H[M_{\bar{u}}] &= 0 & \;\;~  \mbox{ in } \Omega, \\		\bar{u} &= 0& \;\;~  \mbox{ on } \partial \Omega. 	\end{aligned} \right.\end{equation}
%By the solvability of the prescribed Lorentzian mean curvature problem \cite{Bartnik82} and the comparison principle for the mean curvature operator, 
%Obviously, $\bar{u} = 0$ is a supersolution of Dirichlet problem \eqref{GJ1.3}.
 %such that $u\leq \bar{u}$ in $\Omega$ and $u= \bar{u}$ on $\partial \Omega$. 
%Combining with $\psi_u\geq 0$, the subsolution condition \eqref{sub} and the maximum principle, we can find that

In section 3, we have constructed the upper and lower barrier functions, which means
\begin{equation}
	\label{jg-7}
	\sup_{\partial \Omega} |Du|\leq 1-\theta_0.
\end{equation}
%$$ \ul u \leq u \leq \ol u \text{ in } \Omega \text{ and } \ol u = u = \ul u \text{ on } \partial{\Omega},$$
%it follows that
%\[ \frac{\partial \ul u}{\partial \gamma} \leq \frac{\partial u}{\partial \gamma} \leq \frac{\partial \ol u}{\partial \gamma},\]
%where $\gamma$ is the interior unit normal to $\partial \Omega$.
%Then we have
%\begin{equation}
%	\label{jg-7'}
%	\sup_{\ol \Omega} |u| \leq \max\{\sup_{\ol \Omega} |\ol u|, \sup_{\ol \Omega} |\ul u|\} \leq C
%\end{equation}
%and
%\begin{equation}
%	\label{jg-7}
%	\sup_{\partial \Omega} |Du| \leq \max\{\sup_{\partial \Omega} |D \ul u|, \sup_{\partial \Omega} |D \ol u|\} \leq 1 - \theta
%\end{equation}
%for some constant $0 < \theta < 1$ since $\ul u$ and $\ol u$ are both spacelike.

%\section{$C^1$ estimates}

%Next, we establish the global $C^1$ estimates for the admissible solution to \eqref{GJ1.3}. Simliar as \cite{GJ25}, we prove the admissible solution $u$ is spacelike, which means there exists a constant $0 < \theta_0 < 1$ such that
%\begin{equation}
%	\label{spacelike}
%	\sup_{\ol \Omega} |Du|\leq 1-\theta_0.
%\end{equation}

Next, we prove an upper bound for  
$$ 
\tilde{w} =\frac{1}{\sqrt{1-|Du|^2}}=\frac{1}{w}.
$$

\begin{theorem}
\label{gradient}
Let $u \in C^3 (\Omega) \cap C^1 (\ol \Omega)$ be an admissible solution
of \eqref{GJ1.3}. Suppose the smooth function $\psi$ satisfies $\psi_u \geq 0$.
Then
\begin{equation}
\label{gradient-1}
\sup_{\ol \Omega} \tilde{w}
\leq C(1+\sup_{\partial \Omega} \tilde{w}),
\end{equation}
where $C$ depends on $|D\psi|$, the lower bound of $\psi$, $\sup_{\ol \Omega} |u|$ and other known data.
\end{theorem}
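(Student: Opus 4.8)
The plan is to run the maximum principle on the test function
\[
\varphi := \log \tilde{w} + A\,u \;=\; \log\tilde{w} - A\langle X,\epsilon_{n+1}\rangle ,
\]
where $A>0$ is a large constant to be fixed at the end, depending only on $n,k,l$, $\sup_{\ol\Omega}|D_x\psi|$, $\inf\psi$ and $\sup_{\ol\Omega}|u|$. Since $u$ is spacelike, $\varphi\in C^2(\Omega)\cap C^0(\ol\Omega)$ and attains its maximum over $\ol\Omega$ at some point $p$; if $p\in\partial\Omega$ we are done, so I assume $p\in\Omega$ and will derive a contradiction once $A$ is large enough. This will show that the maximum is always on $\partial\Omega$, and then $\log\tilde w(q)\le\log(\sup_{\partial\Omega}\tilde w)+A(\sup_{\ol\Omega}|u|-u(q))\le\log(\sup_{\partial\Omega}\tilde w)+2A\sup_{\ol\Omega}|u|$ for all $q\in\ol\Omega$, which is \eqref{gradient-1} with $C=e^{2A\sup_{\ol\Omega}|u|}$.

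At $p$ I would work in a local orthonormal frame $\{e_i\}$ on $M_u$ that diagonalizes the shape operator, so that $F^{ij}:=\partial F/\partial h_{ij}$ (with $F(h_{ij}):=f(\kappa)$) is diagonal with entries $f_i=\partial f/\partial\kappa_i>0$ by \eqref{ile}. From \eqref{Gauss} together with $u=-\langle X,\epsilon_{n+1}\rangle$ and $\tilde w=-\langle\nu,\epsilon_{n+1}\rangle$ one gets the identities $\nabla_i u=-\langle e_i,\epsilon_{n+1}\rangle$, $\nabla_{ij}u=\tilde w\,h_{ij}$, $\nabla_i\tilde w=h_{ij}\nabla_j u$, and $\nabla_{ij}\tilde w=(\nabla_i h_{jk})\nabla_k u+\tilde w\,(h^2)_{ij}$. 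Differentiating the equation $F(h_{ij})=\psi(x,u)=:\tilde\psi$ once and invoking the Codazzi equation gives $F^{ij}\nabla_i h_{jk}=\nabla_k\tilde\psi$, whence $F^{ij}\nabla_{ij}\tilde w=(\nabla_k\tilde\psi)\nabla_k u+\tilde w\,F^{ij}(h^2)_{ij}$; the critical point condition $\nabla_i\varphi=0$ reads $h_{ij}\nabla_j u=-A\tilde w\,\nabla_i u$, i.e. $\nabla u$ is an eigenvector of $h$ (hence of $F^{ij}$) with eigenvalue $-A\tilde w$; and the homogeneity of $f$ (degree $k-l$ in $\kappa$, since $\lambda_i(\eta)=\sum_{j\ne i}\kappa_j$ is linear in $\kappa$) gives via Euler's relation $F^{ij}\nabla_{ij}u=\tilde w\,F^{ij}h_{ij}=(k-l)\tilde w\,\psi$. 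Combining, the second-order condition $0\ge F^{ij}\nabla_{ij}\varphi$ becomes
\[
0\;\ge\;\frac{(\nabla_k\tilde\psi)\nabla_k u}{\tilde w}\;+\;\Bigl(F^{ij}(h^2)_{ij}-A^2F^{ij}\nabla_i u\,\nabla_j u\Bigr)\;+\;A(k-l)\,\tilde w\,\psi .
\]

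To finish I would bound the three terms. If $\nabla u(p)=0$ then $\tilde w(p)=1$ and the first two terms are $\ge 0$, so the displayed inequality already fails; hence assume $\nabla u(p)\ne 0$, and let $e_1\parallel\nabla u$, so $\kappa_1=-A\tilde w$ and $(\nabla_1 u)^2=|\nabla u|_g^2=\tilde w^2-1$. Keeping only the $i=1$ term of $F^{ij}(h^2)_{ij}=\sum_i f_i\kappa_i^2$, the middle bracket is $\ge f_1A^2\tilde w^2-A^2f_1(\tilde w^2-1)=A^2f_1\ge 0$. For the first term, $\nabla_k\tilde\psi=\psi_{x_a}\nabla_k x_a+\psi_u\nabla_k u$, and a direct computation in base coordinates gives $\sum_a\psi_{x_a}\langle\nabla x_a,\nabla u\rangle_g=w^{-2}\,D\psi\cdot Du$, which is bounded in absolute value by $|D\psi|\,\tilde w^2$; since $\psi_u\ge 0$ the remaining piece $\psi_u|\nabla u|_g^2$ is nonnegative and is discarded, so $(\nabla_k\tilde\psi)\nabla_k u\ge -|D\psi|\,\tilde w^2$ and the first term is $\ge -|D\psi|\,\tilde w$. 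Thus $0\ge\bigl((k-l)\psi\,A-|D\psi|\bigr)\tilde w$, and since $\tilde w\ge 1$ and $\psi\ge\inf\psi>0$, the choice $A>|D\psi|/\bigl((k-l)\inf\psi\bigr)$ yields the desired contradiction.

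The main obstacle I anticipate is the term $(\nabla_k\tilde\psi)\nabla_k u$, which is a priori only of size $\tilde w^2$, not $\tilde w$; this forces the good term to be linear in $\tilde w$ with an adjustable coefficient, which in turn dictates the form $\varphi=\log\tilde w+Au$ with $A>0$ — precisely so that the $\nabla u$-direction becomes an eigendirection of $h$ with the large \emph{negative} eigenvalue $-A\tilde w$, making the bracket $F^{ij}(h^2)_{ij}-A^2F^{ij}\nabla_i u\,\nabla_j u$ nonnegative. It is also exactly here that the hypothesis $\psi_u\ge 0$ is used, to discard the otherwise uncontrolled contribution $\psi_u|\nabla u|_g^2\sim\psi_u\tilde w^2$; note that concavity of $f$ plays no role in this estimate, only ellipticity and homogeneity.
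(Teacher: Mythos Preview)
Your proposal is correct and follows essentially the same approach as the paper: the same test function $\log\tilde w+Au$, the same critical-point identity $h_{ij}\nabla_j u=-A\tilde w\,\nabla_i u$ (whence $\kappa_1=-A\tilde w$ along $e_1\parallel\nabla u$), the same use of Euler homogeneity for $F^{ij}h_{ij}=(k-l)\psi$, and the same cancellation $f_1\kappa_1^2-A^2f_1|\nabla u|^2=A^2f_1\ge 0$ leading to $0\ge\bigl(A(k-l)\psi-|D\psi|\bigr)\tilde w$. The only cosmetic difference is that the paper sets up the frame by first rotating Euclidean coordinates so $u_1=|Du|$ and then checking that $\{h_{ij}\}$ is diagonal, whereas you argue more intrinsically that $\nabla u$ is an eigenvector of $h$; the resulting computations and the threshold $A>|D\psi|/((k-l)\inf\psi)$ are identical.
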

\begin{proof}
Take barrier function
\[
\hat{Q} := \tilde{w} e^{Bu},
\]
where $B$ is a positive constant to be determined later.
Suppose the maximum value of $\hat{Q}$ is achieved at an interior point $x_0 \in \Omega$. Then
\[
Q := \log \hat{Q} = \log \tilde{w} + Bu
\]
also attains its maximum at $x_0$. Let $\epsilon_1, \ldots, \epsilon_{n+1}$ be a standard basis of $\mathbb{R}^{n+1}$. We rotate the coordinate system $\epsilon_1, \ldots, \epsilon_{n}$ so that $u_1 (x_0) = |Du (x_0)|$ and $u_j (x_0) = 0$ for $j \geq 2$. 
Next, we consider on the manifold $M_u$. Define \[
e_i = \gamma^{is} \tilde{\partial}_s, \ i =1, \ldots,n,
\]
where $\gamma^{is} := \delta_{is}+\frac{u_iu_s}{w(1+w)}$ and $\tilde{\partial}_s := \epsilon_s + u_s \epsilon_{n+1}$. It's clear that 
$\{e_1,e_2,\cdots,e_n\}$ is an orthonormal frame on $TM_u$ around $X_0 = (x_0, u (x_0))$ such that, at $x_0$,
\begin{equation}
	\label{GJ3.1}
	\nabla_1u=\frac{|Du|}{w}=|\nabla u|, \nabla_i u=\frac{u_i}{w}=0, \text{ for } i\geq 2.
\end{equation}
We may further rotate $\epsilon_2, \ldots, \epsilon_n$ such that $\{u_{ij}\}_{i,j \geq 2}$ is
diagonal at $x_0$.
At $x_0$, we have
\begin{equation}
	0 = \nabla_i Q = \frac{\nabla _i \tilde{w}}{\tilde{w}} +B \nabla_i u\notag
\end{equation}
Note that by the Weingarten formula, 
$$
\nabla_i \tilde{w} = -\nabla_i \langle \nu, \epsilon_{n+1} \rangle = -\langle h_{ij} e_j, \epsilon_{n+1} \rangle
= - h_{ij} \langle \gamma^{js} \tilde {\partial_s}, \epsilon_{n+1}\rangle
$$
and 
\[
\langle \gamma^{js} \tilde {\partial_s}, \epsilon_{n+1}\rangle = - \gamma^{js} u_s = - \frac{u_j}{w} = - \nabla_j u.
\]
Combing \eqref{GJ3.1}, we have, at $x_0$
\begin{equation}
	\label{GJ3.2}
	0 = \nabla_i Q = \frac{h_{i1} \nabla_1 u}{\tilde{w}} + B \nabla_i u
\end{equation}
and 
\begin{equation}
	\label{GJ3.3}
	0 \geq F^{ij} \nabla_{ij} Q =
	\frac{F^{ij} \nabla_j h_{i1} \nabla_1 u + F^{ij} h_{il} \nabla_{jl} u}{\tilde{w}}
	- \frac{F^{ij} (h_{i1} \nabla_1 u) (h_{j1} \nabla_1 u) }{\tilde{w}^2} + B F^{ij}\nabla_{ij} u,
\end{equation}
where
\[
F^{ij} := \frac{\frac{\sigma_{k}}{\sigma_{l}}(\lambda(\eta))}{\partial h_{ij}}.
\]
In the rest of the proof, we declare that all computations are carried out at $X_0$.
From \eqref{GJ3.2}, we have
\[
h_{11} = -B \tilde{w} \mbox{ and } h_{1i} = 0 \mbox{ for } i \geq 2.
\]
Since at $X_0$,
\[
h_{11} = \frac{u_{11}}{w^3}, h_{1i} = \frac{u_{1i}}{w^2} \mbox{ and }
h_{ij} = \frac{u_{ij}}{w}
\mbox{ for } i, j \geq 2,
\]
we find that the matrix $\{h_{ij}\}$ is diagonal at $X_0$, and so is $\{F^{ij}\}$.
By the Codazzi equation and differentiating the equation \eqref{GJ1.1}, we have
\begin{equation}
	\label{GJ3.5}
	\begin{aligned}
		F^{ii} \nabla_i h_{i1} = \,& F^{ii} \nabla_1 h_{ii} = \nabla_1 \psi
		= \psi_{x_j} \nabla_1 x_j + \psi_u \nabla_1 u \\
		= \,& \frac{\psi_{x_1}}{w} + \psi_u \nabla_1 u
		= \tilde{w} (\psi_{x_1} +\psi_u u_1 )
		\geq \tilde{w} {\psi_{x_1}},
	\end{aligned}
\end{equation}
where the last inequality is due to that $\psi_u \geq 0$.
By the Gauss formula, we have
\begin{equation}
	\label{GJ3.6}
	\nabla_{ij} u=-\nabla_{ij}\langle X,\epsilon_{n+1}\rangle=-\langle \nabla_{ij}X,\epsilon_{n+1}\rangle =
	-h_{ij}\langle \nu,\epsilon_{n+1}\rangle = \tilde{w} h_{ij}.
\end{equation}
Combining \eqref{GJ3.1}-\eqref{GJ3.6} and \eqref{ile}, we obtain
\begin{equation}
	\label{GJ3.7}
	\begin{aligned}
		0 \geq \,& \psi_{x_1} u_1 \tilde{w}
		+ F^{ii} h_{ii}^2
		- B^2 F^{11} (\nabla_1 u)^2 + B (k-l) \psi \tilde{w}\\
		\geq \,& - |D \psi|\cdot |Du| \tilde{w} + B^2 \tilde{w}^2 F^{11} - B^2 F^{11} (\nabla_1 u)^2 + B (k-l) \psi \tilde{w}\\
		\geq \,& - |D \psi| \tilde{w} + B^2 F^{11} + B (k-l) \psi \tilde{w}\\ 
		\geq \,& (B (k-l) \psi - |D \psi|) \tilde{w}.
	\end{aligned}
\end{equation}
Then we get a contradiction provided
\[
B > \frac{\sup_{\ol \Omega}|D\psi|}{(k-l) \inf_{\ol{\Omega}}\psi}
\]
and
\eqref{gradient-1} follows as \cite{Bayard03}. It follows from \eqref{jg-7} and \eqref{gradient-1} that \eqref{spacelike} holds.
\end{proof}

%\begin{remark}	\label{remark}\textcolor{red}{Although Chen-Tu-Xiang \cite{CTX25} has given the gradient estimate for admissible solutions of the equation }	\[	\frac{\sigma_k}{\sigma_l}(\lambda(\eta[M_u])) = \psi(x, u, \nu)\geq 0	\]in Euclidean space, gradient estimates in Minkowski space remain a challenge for degenerate problems due to the metric. In addition, no satisfactory approach has been developed to control the term $\nu$, even in non-degenerate cases.\end{remark}

\section{$C^2$ estimates for homogeneous boundary data}

We shall first estimate the second derivatives of $u$ on the boundary.

\begin{theorem}
\label{thm-boundary}
Suppose  $\Omega\subset\mathbb{R}^n$ is a bounded domain with smooth boundary $\partial \Omega$. 
Let $u \in C^3 (\overline{\Omega})$ be an admissible solution of \eqref{GJ1.3}. Then there
exists a positive constant $C$ depending only on $\theta_0$,
$\|\psi\|_{C^{1} (\overline{\Omega} \times [-\mu_0, \mu_0]) }$, $\|u\|_{C^1(\overline \Omega)}$ and $\partial \Omega$
satisfying
\begin{equation}
\label{B2-0}
\max_{\partial \Omega} |D^2 u| \leq C,
\end{equation}
where $\mu_0 := \|u\|_{C^0 (\overline{\Omega})}$.
\end{theorem}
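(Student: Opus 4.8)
The strategy follows the classical scheme of Ivochkina--Caffarelli--Nirenberg--Spruck as adapted to Minkowski space (cf. \cite{GJ25, Bayard03, Urbas03}). Fix a boundary point, which we take to be the origin, and rotate so that the positive $x_n$-axis is the interior normal to $\partial\Omega$; near the origin represent $\partial\Omega$ as a graph $x_n = \rho(x')$ with $\rho(0)=0$, $D\rho(0)=0$. Since $u = \underline u = 0$ on $\partial\Omega$, the pure tangential second derivatives $u_{\alpha\beta}(0)$ for $\alpha,\beta < n$ are already determined by the boundary data and the geometry of $\partial\Omega$: differentiating the boundary identity $u(x',\rho(x'))=0$ twice gives $u_{\alpha\beta}(0) = -u_n(0)\,\rho_{\alpha\beta}(0)$, so these are bounded by $\|\varphi\|_{C^3}$, $\partial\Omega$ and the gradient bound $|Du|\le 1-\theta_0$ from Theorem \ref{gradient}. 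This disposes of the tangential-tangential entries.

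For the mixed tangential-normal derivatives $u_{\alpha n}(0)$, $\alpha<n$, I would apply the linearized operator $L = F^{ij}\nabla_{ij}$ (written in the ambient/Euclidean coordinates, i.e.\ acting on the $a_{ij}$-form \eqref{matrix} of the equation) to the tangential derivative operator $T_\alpha = \partial_\alpha + \sum_{\beta<n}\rho_{\alpha\beta}(x')\,(x_\beta\partial_n - x_n\partial_\beta)$ applied to $(u-\underline u)$, and build a barrier of the form
\[
\Psi = A_1 v + A_2 |x|^2 \pm T_\alpha(u-\underline u),
\]
where $v = (\underline u - u) + \tau d - N d^2$ (or a concave increasing function of such a combination), $d$ is the distance to $\partial\Omega$, and $\tau, N, A_1, A_2$ are constants chosen in that order. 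The key inputs are: $L(u-\underline u) \le 0$ in a suitable sense because $\underline u$ is a subsolution and $F=\sigma_k/\sigma_l$ is concave on the admissible cone (Proposition with \eqref{pro3}); the bound $L v \le -\varepsilon(1+\sum F^{ii})$ coming from the $(\eta,k)$-convexity of $\Omega$ together with \eqref{ile}; and control of $L(T_\alpha(u-\underline u))$, which involves the differentiated equation — here one commutes $T_\alpha$ past $F^{ij}\nabla_{ij}$, producing first-derivative terms of $\psi$, lower-order curvature terms, and crucially the Minkowski cross-terms from the metric $g^{ij} = \delta_{ij} + u_iu_j/w^2$. Since we are on the boundary and already have $|Du|\le 1-\theta_0$ and pointwise control of tangential Hessian entries, these cross-terms are harmless; one then invokes $|T_\alpha(u-\underline u)| \le C|x|$ on $\partial\Omega$ and the maximum principle on $\Omega\cap B_r$ to conclude $|u_{\alpha n}(0)|\le C$.

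For the double-normal derivative $u_{nn}(0)$ one argues that $a_{nn}(0)$ cannot be too large: suppose $u_{nn}(0)$ were enormous, then the eigenvalues of $A$ at the origin are a small perturbation (controlled by the already-bounded tangential and mixed entries) of $(\kappa^b_1,\dots,\kappa^b_{n-1}, \text{large})$ transported by $\gamma^{ik}$, and the structure condition \eqref{cj-2} forces $f(\kappa)\to\infty$, contradicting $f=\psi\le C$; the upper bound $u_{nn}(0)\le C$ follows, and the lower bound follows from $\lambda(\eta)\in\Gamma_k\subset\Gamma_1$, i.e.\ $\mathrm{tr}\,A$ is controlled, together with the already-established bounds on the other entries. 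Combining the three cases gives \eqref{B2-0}.

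The main obstacle I anticipate is the estimate of $L(T_\alpha(u-\underline u))$: commuting the tangential vector field past the fully nonlinear operator in Minkowski space generates terms quadratic in the second fundamental form (the ``square of curvatures'' flagged in the introduction). On the boundary these are tamed because one works in $\Omega\cap B_r$ where the tangential part of $D^2u$ is already pinned down by the boundary data, but one must be careful that the concavity inequality \eqref{pro3} and the lower bound \eqref{ile} on $\sum F^{ii}$ are strong enough to absorb them into $-\varepsilon(1+\sum F^{ii})$ after choosing the barrier constants in the correct order; this is exactly where the hypothesis $k\ge 2$ and $l<k$ (so that $k-l\ge 1$ and the power $1-\frac{1}{k-l}$ behaves well in \eqref{ile}) will be used.
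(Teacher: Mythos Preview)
Your three-part decomposition matches the paper's, and Parts~1 and~3 are essentially right: the paper gets the tangential bound exactly as you do, and for the double-normal bound argues via the CNS eigenvalue perturbation lemma that large $u_{nn}(0)$ forces $\sigma_k/\sigma_l(\lambda(\eta))\to\infty$, contradicting boundedness of $\psi$.

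The gap is in Part~2. In Minkowski space the operator $G(D^2u,Du)$ depends on $Du$ through the metric, so differentiating the equation once produces the extra term $-G^s u_{s\alpha}$, and by Lemma~\ref{lemGS2} one has $|G^s|\le C\sum_i f_i|\kappa_i|$, not merely $C\sum_i f_i$. Your claim that these cross-terms are ``harmless'' because of ``pointwise control of tangential Hessian entries'' is incorrect: that control holds only on $\partial\Omega$, not in the interior of the neighborhood $\omega_\delta$ where the maximum principle is being applied. And your proposed absorption into $-\varepsilon(1+\sum F^{ii})$ cannot succeed, since $\sum_i f_i|\kappa_i|$ is not dominated by $\sum_i f_i$ when the curvatures are large---which is precisely the regime in question.

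The paper's remedy is Ivochkina's device: one works with $W=\nabla'_\alpha u-\tfrac12\sum_{\beta<n}u_\beta^2$ rather than with $T_\alpha(u-\underline u)$. The added quadratic piece produces, after applying $L$, a \emph{favorable} negative term $-\sum_{\beta<n}G^{ij}u_{\beta i}u_{\beta j}=-w\sum_{\beta<n}\sum_i\eta_{\beta i}^2 f_i\kappa_i^2$ (see \eqref{BC2-14.}), quadratic in curvature, which can absorb the linear $\sum f_i|\kappa_i|$ terms by Cauchy--Schwarz. The absorption is not automatic: it requires a case analysis (Lemma~\ref{BC2-lem1}) on whether the weights $\sum_{\beta<n}\eta_{\beta i}^2$ are uniformly bounded below, and if not, on the sign and size of the exceptional curvature $\kappa_r$, exploiting the specific algebra of $\sigma_k/\sigma_l(\eta)$ to establish the key bound $f_r\kappa_r\le C$. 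One then passes to $\tilde W=1-e^{-bW}$ to dispose of the resulting $G^{ij}W_iW_j$ term, and the barrier $\Psi=v-td+\tfrac N2 d^2$ is built directly from the $(\eta,k)$-convexity of $\Omega$ via $v=\rho-x_n-\theta|x'|^2+Kx_n^2$, not from the subsolution $\underline u$. None of this machinery is present in your plan, and without it the mixed-derivative barrier argument does not close.
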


We will divide this estimate \eqref{B2-0} into three parts for proof: the double tangential derivative estimate, the mixed tangential-normal derivative estimate, and the double normal derivative estimate on $\partial \Omega$.

For any point $x_0 \in \partial \Omega$, without loss of generality, we may assume that $x_0$ is the origin and that the
positive $x_n$-axis is the inner normal direction to $\partial \Omega$ at the origin.
Furthermore, we may suppose that in a neighbourhood of the origin, the boundary $\partial \Omega$ is given by
\begin{equation}
\label{BC2-1}
x_n = \rho (x') = \frac{1}{2} \sum_{\alpha < n} \kappa'_\alpha x_\alpha^2 + O (|x'|^3),
\end{equation}
where $\kappa'_1, \ldots, \kappa'_{n-1}$ are the principal curvatures of $\partial \Omega$ at the origin as before and $x' = (x_1, \ldots, x_{n-1})$.
Define
$$\omega_\delta = \{x \in \Omega: \rho (x') < x_n < \rho (x') + \delta^2 , |x'| < \delta\},$$
we can find that the boundary $\partial \omega_\delta$ consists of three parts:
$$\partial \omega_\delta
= \partial_1 \omega_\delta \cup \partial_2 \omega_\delta \cup \partial_3 \omega_\delta,$$ where
$\partial_1 \omega_\delta$, $\partial_2 \omega_\delta$ and $\partial_3 \omega_\delta$  are defined by $\{x_n=\rho\} \cap\overline{\omega}_{\delta}$, $\{ x_n=\rho+\delta^2\}\cap\overline{\omega}_{\delta}$
and $\{|x'| = \delta\}\cap\overline{\omega}_{\delta}$ respectively.

\noindent
{\bf Part 1.} The double tangential derivative estimate on $\partial \Omega$, that is \begin{equation}
	\label{BC2-2}
	|u_{\alpha \beta} (0)| \leq C \mbox{  for } 1\leq \alpha, \beta \leq n - 1.
\end{equation}

Since \eqref{BC2-1}, We can obtain \eqref{BC2-2} by differentiating the boundary condition $u = 0$ twice on $\partial \Omega$.

\noindent
{\bf Part 2.} The mixed tangential-normal derivative estimate on $\partial \Omega$, that is 
\begin{equation}
	\label{BC2-3}
	|u_{\alpha n} (0)| \leq C \mbox{  for } 1\leq \alpha \leq n - 1.
\end{equation}

We rewrite the equation \eqref{GJ1.1} by the form
\begin{equation}
\label{1-1-1}
G (D^2 u, Du) := \frac{\sigma_{k}}{\sigma_{l}}(\lambda (\eta (M_u))) = f (\lambda (A[u])) = \psi (x, u),
\end{equation}
where $G = G (r, p)$ is viewed as a function of $(r, p)$ for $r \in S^{n \times n}$ and $p \in \mathbb{R}^n$.
Define
\begin{equation}
\label{BC2-25}
G^{ij} = \frac{\partial G}{\partial r_{ij}} (D^2 u, D u),\ \ G^{i} = \frac{\partial G}{\partial p_i} (D^2 u, D u)
\end{equation}
and the linearized operator by
\[
L = G^{ij} \partial_{ij}.
\]
We need the following lemma:
%Similar to lemma 2.3 of \cite{GS04}, we have the following lemma.
\begin{lemma} We have
\label{lemGS2}
\begin{equation}
\label{GS-2}
G^s = \frac{u_s}{w^2} \sum_i f_i \kappa_i + \frac{2}{w (1+w)} \sum_{t,j}F^{ij} a_{it} \big(w u_t \gamma^{sj}
   + u_j \gamma^{ts}\big),
\end{equation}
where $w = \sqrt{1 - |Du|^2}$, $a_{ij} =\frac{1}{w}\gamma^{ik}u_{kl}\gamma^{lj}$, $\kappa = \lambda (\{a_{ij}\})$, $f_i = \frac{\partial f (\kappa)}{\kappa_i}$ and
\[
F^{ij} = \frac{\partial f (\lambda (A[u]))}{\partial a_{ij}}.
\]
\end{lemma}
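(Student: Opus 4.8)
The plan is to treat \eqref{GS-2} as a direct computation for the function $G(r,p)=f(\lambda(A[u]))$, in which the dependence on $p=Du$ enters only through $w=\sqrt{1-|Du|^2}$ and the matrix $\gamma^{ik}=\delta_{ik}+\frac{u_iu_k}{w(1+w)}$ (the Hessian $r=D^2u=\{u_{kl}\}$ being held fixed). By the chain rule,
\[
G^s=\frac{\partial G}{\partial p_s}=\sum_{i,j}F^{ij}\,\frac{\partial a_{ij}}{\partial p_s},\qquad a_{ij}=\frac1w\,\gamma^{ik}u_{kl}\gamma^{lj}.
\]
Applying the product rule to $a_{ij}$ splits $\partial a_{ij}/\partial p_s$ into a term from differentiating the scalar $1/w$ and two terms from differentiating the two factors $\gamma$; after contraction against the symmetric tensor $F^{ij}$ the latter two coincide, so it is enough to treat one of them.

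For the $1/w$ term I would use $\partial w/\partial p_s=-u_s/w$, hence $\partial(1/w)/\partial p_s=u_s/w^3$; the contribution is then $\frac{u_s}{w^3}\gamma^{ik}u_{kl}\gamma^{lj}=\frac{u_s}{w^2}a_{ij}$, and contracting with $F^{ij}$ together with the standard identity $\sum_{i,j}F^{ij}a_{ij}=\sum_i f_i\kappa_i$ — valid for any smooth symmetric function of the eigenvalues — produces exactly the first summand of \eqref{GS-2}.

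For the remaining term I would compute $\tfrac2w\sum_{i,j}F^{ij}\big(\partial_{p_s}\gamma^{ik}\big)u_{kl}\gamma^{lj}$ as follows. Letting $\{\gamma_{km}\}$ be the inverse of $\{\gamma^{km}\}$, one has $\tfrac1w u_{kl}\gamma^{lj}=\gamma_{km}a_{mj}$ and $\big(\partial_{p_s}\gamma^{ik}\big)\gamma_{km}=-\gamma^{ik}\,\partial_{p_s}\gamma_{km}$, which rewrites the term as $-\tfrac2w\sum F^{ij}\gamma^{ik}\big(\partial_{p_s}\gamma_{km}\big)a_{mj}$; since $\gamma_{km}=\delta_{km}-\frac{u_ku_m}{1+w}$ and $\partial_{p_s}\tfrac1{1+w}=\frac{u_s}{w(1+w)^2}$, this is completely explicit, and after expanding one repeatedly invokes the elementary identity $\sum_k\gamma^{ik}u_k=\frac{u_i}{w}$ — which is precisely where $|Du|^2=1-w^2$ is used — to collapse everything into a combination of $\gamma^{is}$, $a_{sj}$ and the vector $b_i:=\sum_t a_{it}u_t$. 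The last step is algebraic: one expands the claimed right-hand side of \eqref{GS-2} using $\sum_t a_{it}u_t=b_i$, $\gamma^{ts}=\delta_{ts}+\frac{u_tu_s}{w(1+w)}$ and $\sum_k\gamma^{ik}u_k=\frac{u_i}{w}$, and checks that it matches. Equivalently, both sides reduce to $\frac{u_s}{w^2}\sum_i f_i\kappa_i+\frac2w\sum_{j}\gamma^{sj}\big(FAu\big)_j$ with $(FAu)_j=\sum_{p,q}F^{jp}a_{pq}u_q$; the reconciliation of the leftover rank-one pieces uses the commutativity $FA=AF$ (true because $F=Df(A)$ is a polynomial in $A$) together with $\sum_j\gamma^{sj}v_j=v_s+\frac{u_s}{w(1+w)}\sum_j u_jv_j$.

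I expect the only genuine difficulty to be bookkeeping: carrying along the several rank-one tensors (proportional to $u_iu_ku_s$, $u_iu_sb_j$, and the like) generated by differentiating $\gamma$, and verifying they recombine exactly into the compact form \eqref{GS-2}. There is no analytic subtlety; the two identities $\sum_k\gamma^{ik}u_k=u_i/w$ and $FA=AF$ do all the real work.
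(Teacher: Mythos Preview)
Your outline is correct. The paper itself does not give a proof of this lemma; it simply writes ``The details of the proof can be found in \cite{GJ25}.'' Since the identity is nothing more than the chain rule applied to $G(r,p)=f(\lambda(A))$ with $A=\frac{1}{w}\gamma\, r\, \gamma$, your direct computation is the natural approach and almost certainly coincides with what appears in the cited reference. I checked your reduction: both the direct differentiation of $a_{ij}$ and the right-hand side of \eqref{GS-2} collapse, via $\gamma^{ik}u_k=u_i/w$ and $FA=AF$, to
\[
\frac{u_s}{w^2}\sum_i f_i\kappa_i+\frac{2}{w}\sum_j\gamma^{sj}(FAu)_j,
\]
so the bookkeeping closes. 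One small remark: your justification ``$F=Df(A)$ is a polynomial in $A$'' is not literally true for $f=(\sigma_k/\sigma_l)^{1/(k-l)}$, but the conclusion $FA=AF$ holds for any smooth symmetric function because $F$ and $A$ are simultaneously diagonalizable; you may want to phrase it that way.
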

The details of the proof can be found in \cite{GJ25}.

Besides, to prove \eqref{BC2-3}, we shall use the strategy of \cite{Ivochkina90} to consider the function
\[
W := \nabla'_\alpha u  - \frac{1}{2} \sum_{1\leq \beta \leq n - 1}u_\beta^2
\]
defined on $\ol \omega_\delta$ for small $\delta$,
where
\[
\nabla'_\alpha u := u_\alpha + \rho_\alpha u_n, \mbox{ for } 1\leq \alpha \leq n - 1.
\]
Then we have
\[
W_s = u_{\alpha s} + \rho_{\alpha s} u_n  + \rho_{\alpha} u_{ns} - \sum_{\beta=1}^{n-1} u_{\beta} u_{\beta s}
\]
and
\begin{equation}
	\begin{aligned}
		W_{ij} =& u_{\alpha ij} + \rho_{\alpha ij} u_n + \rho_{\alpha i} u_{nj} + \rho_{\alpha j} u_{ni}\\\notag
		&+ \rho_{\alpha} u_{nij} - \sum_{\beta=1}^{n-1}(u_{\beta i} u_{\beta j} + u_{\beta} u_{\beta ij}).\notag
	\end{aligned}
\end{equation}
%Since the proof of the following lemma is similar to that of Lemma 5.3 in \cite{JaoSun22}, we omit its proof. For reader's convenience, we provide
%a detailed proof for a similar result (Lemma \ref{gj-lem2}) later.
%Similar to Lemma 5.3 in \cite{JaoSun22}, we prove the following lemma:
We also need the following lemma:
\begin{lemma} If $\delta$ is sufficiently small, we have
	\label{BC2-lem1}
	\begin{equation}
		\label{BC2-15}
		LW \leq C \left(1 + |D W| + \sum_i G^{ii} + G^{ij} W_i W_j\right),
	\end{equation}
	where $C$ is a positive constant depending on $n$, $\theta_0$, $\|\psi\|_{C^1 (\ol \Omega \times [-\mu_0, \mu_0]}$ and $\partial \Omega$, where $\mu_0 = \|u\|_{C^0 (\overline{\Omega})}$.
\end{lemma}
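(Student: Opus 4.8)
My plan is to compute $LW = G^{ij}W_{ij}$ from the formula for $W_{ij}$ displayed above and to dispose of the third–order terms by differentiating the equation \eqref{1-1-1}. Writing it as $G(D^2u, Du) = \psi(x,u)$ and differentiating in $x_\alpha$, in $x_n$, and in $x_\beta$ ($\beta \le n-1$) gives, for each such direction $s$,
\[
G^{ij}u_{ijs} + G^m u_{ms} = \psi_{x_s} + \psi_u u_s ,
\]
whose right side is bounded. Multiplying the $x_n$–identity by $\rho_\alpha$, adding it to the $x_\alpha$–identity, and using $u_{s\alpha} + \rho_\alpha u_{sn} = W_s - \rho_{\alpha s}u_n + \sum_\beta u_\beta u_{\beta s}$, I would convert $G^{ij}(u_{\alpha ij} + \rho_\alpha u_{nij})$ into bounded terms plus $-G^s(W_s - \rho_{\alpha s}u_n)$ plus $-\sum_\beta u_\beta G^s u_{\beta s}$; the latter cancels exactly the contribution of the term $-\sum_\beta u_\beta G^{ij}u_{\beta ij}$ that $-\frac12\sum_\beta u_\beta^2$ produces in $W_{ij}$. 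After this bookkeeping one reaches
\[
LW \le C - G^s\bigl(W_s - \rho_{\alpha s}u_n\bigr) + u_n G^{ij}\rho_{\alpha ij} + 2G^{ij}\rho_{\alpha i}u_{nj} - \sum_\beta G^{ij}u_{\beta i}u_{\beta j} ,
\]
and the last sum is $\le 0$ because $(G^{ij}) > 0$ on admissible solutions; it is kept as a good absorbing term.

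It remains to estimate the three explicit terms. The term $u_n G^{ij}\rho_{\alpha ij}$ is $\le C\sum_i G^{ii}$ since $|\rho_{\alpha ij}| \le C$. For $2G^{ij}\rho_{\alpha i}u_{nj}$ I would use Cauchy–Schwarz for $(G^{ij})$ to split off $C_\varepsilon\sum_i G^{ii}$ and a remainder which, after expressing $u_{nj}$ through $W_j$ and the tangential second derivatives $u_{\beta j}$, gets absorbed into $\sum_i G^{ii}$, into $G^{ij}W_iW_j$, and into $-\sum_\beta G^{ij}u_{\beta i}u_{\beta j}$; here I use the structural inequality \eqref{ile}, which makes all $f_i$ comparable, so $(G^{ij})$ is uniformly elliptic up to scaling and $\sum_i G^{ii}$ is bounded below by a positive constant (using $\psi > 0$). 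For $-G^s(W_s - \rho_{\alpha s}u_n)$ I would insert the formula for $G^s$ from Lemma \ref{lemGS2}: the piece containing $\sum_i f_i\kappa_i$ contributes at most $C(1+|DW|)$, because $\sum_i f_i\kappa_i = (k-l)\psi$ by Euler's identity ($\sigma_k/\sigma_l$ being homogeneous of degree $k-l$) and $\psi$ is bounded, while the pieces built from $F^{ij}a_{it}$ are handled again by Cauchy–Schwarz against $(G^{ij})$ and absorbed into $G^{ij}W_iW_j$, $\sum_i G^{ii}$, and the good term. Choosing $\delta$ small, so that $|\rho_\alpha|$ and $|D\rho_\alpha|$ are small on $\omega_\delta$, makes all $\delta$–weighted errors harmless and gives \eqref{BC2-15}.

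The delicate point is the gradient–derivative term $G^s$. In the Euclidean Hessian case it is essentially absent, but here, as stressed in the introduction, differentiating the Minkowski equation generates extra curvature–dependent terms, and $G^s$ is genuinely large where $|D^2 u|$ is large; it becomes harmless only after it is paired with $W_s$ and run through $\sum_i f_i\kappa_i = (k-l)\psi$ and \eqref{ile}. The core of the argument is thus the careful accounting verifying that every error term thrown off in this process is of one of the three absorbable types $\sum_i G^{ii}$, $G^{ij}W_iW_j$, or $\sum_\beta G^{ij}u_{\beta i}u_{\beta j}$ — the analogue, for the Lorentzian metric, of the computations in \cite{GJ25} and \cite{CTX25}.
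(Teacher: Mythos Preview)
Your initial bookkeeping is correct, and you reach essentially the same intermediate inequality as the paper (compare \eqref{BC2-26.}). But the proposal breaks down at the absorption step. The good term $-\sum_{\beta\le n-1}G^{ij}u_{\beta i}u_{\beta j}$, rewritten in the diagonalising frame, equals $-w\sum_{\beta\le n-1}\sum_i\eta_{\beta i}^2 f_i\kappa_i^2$; it controls $f_i\kappa_i^2$ only through the coefficients $\sum_\beta\eta_{\beta i}^2$, and these can be arbitrarily small for \emph{one} index $r$ (the ``normal--like'' direction). Your Cauchy--Schwarz remainders, both from $2G^{ij}\rho_{\alpha i}u_{nj}$ and from the $F^{ij}a_{it}$ piece of $G^s$, contain a full $\epsilon\sum_i f_i\kappa_i^2$ with the $r$-th summand present, and this is \emph{not} absorbed by the good term alone. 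Your suggested fix---``express $u_{nj}$ through $W_j$''---cannot work: in $W_j = u_{\alpha j} + \rho_{\alpha j}u_n + \rho_\alpha u_{nj} - \sum_\beta u_\beta u_{\beta j}$ the derivative $u_{nj}$ enters only with the coefficient $\rho_\alpha$, which vanishes at the base point; solving for $u_{nj}$ introduces a factor $\rho_\alpha^{-1}$ and destroys the estimate.

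The paper closes this gap by a genuine case analysis. When $\sum_\beta\eta_{\beta r}^2<\epsilon$ for some $r$ (Case~(b)), Bayard's lemma gives $\sum_\beta\eta_{\beta i}^2\ge c_1$ for $i\ne r$, so the good term controls $\sum_{i\ne r}f_i\kappa_i^2$; one must then show separately that the $r$-th contribution is harmless. This splits further into $\kappa_n\ge -\epsilon_0\kappa_r$ (where one proves the structural inequality $f_r\kappa_r\le C$ from the specific form of $\sigma_k(\eta)/\sigma_l(\eta)$) and $\kappa_n<-\epsilon_0\kappa_r$ (where one exploits $|\gamma^{sj}b_{jr}W_s|\le C\kappa_r(\epsilon+|\rho_\alpha|)+C$ together with smallness of $\delta$). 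You have the right ingredients---Euler's identity $\sum_i f_i\kappa_i=(k-l)\psi$ and the comparability \eqref{ile}---and indeed these can shorten the paper's Case~(b-1) (since $f_r\ge c\sum_j f_j$ turns the Euler identity into $|\kappa_r|\le C+C\sum_{i\ne r}|\kappa_i|$); but you still need Bayard's lemma, the case split on $\sum_\beta\eta_{\beta i}^2$, and the separate Case~(b-2) treatment, none of which appear in your sketch.
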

The proof of this lemma is similar to lemma 5.5 in \cite{GJ25}, we put it in the appendix for completeness. 

Assume \eqref{BC2-15} is correct, next, we establish the mixed tangential-normal derivative estimate \eqref{BC2-3}.

Note that there exist positive constants $\theta$ and $K$ such that
\begin{equation}
	\label{BC2-5'}
	(\kappa_1' - 3 \theta, \ldots, \kappa_{n-1}' - 3 \theta, 2 K) \in \widetilde{\Gamma}_k
\end{equation}
holds.

Define
\begin{equation}
\label{BC2-6}
v = \rho (x') - x_n - \theta |x'|^2 + K x_n^2
\end{equation}
on $\ol \omega_{\delta}$.
We see that when $\delta(\theta, K)$ is sufficiently small, we have
\begin{equation}
\label{BC2-12}
\begin{aligned}
v \leq & - \frac{\theta}{2} |x'|^2, & \mbox{ on } \partial_1 \omega_\delta,\\
v \leq & - \frac{\delta^2}{2}, & \mbox{ on } \partial_2 \omega_\delta,\\
v \leq & - \frac{\theta \delta^2}{2},   & \mbox{ on } \partial_3 \omega_\delta.
\end{aligned}
\end{equation}
Besides, by \eqref{BC2-1} and \eqref{BC2-5'}, we have   
\begin{equation}
	\label{BC2-5}
\lambda (D^2 v(0))=	(\kappa_1' - 2 \theta, \ldots, \kappa_{n-1}' - 2 \theta, 2 K) \in \widetilde{\Gamma}_k,
\end{equation}
 where $\lambda (D^2 v)$ denotes the eigenvalues of $D^2 v$. 
%and strictly convex for $n=2$.
Thus, there exists an uniform constant $\eta_0 > 0$ depending only on $\theta$, $\p\Omega$
and $K$ satisfying
\[
\lambda (D^2 v - 2 \eta_0 I) \in \widetilde{\Gamma}_k \mbox{ on } \overline{\omega}_\delta.
  %\mbox{ and } \lambda (D^2 v - 2 \eta_0 I) \in \Gamma_{2} \mbox{ for } n=2 \mbox{ on } \overline{\omega}_\delta.
\]
%Then by a similar calculation of Lemma 3.1 in \cite{Ivochkina91} or Proposition 2.1 in \cite{JW22}, we have
We claim,
\begin{equation}
\label{BC2-4}
\lambda \left(\frac{1}{w} \{\gamma^{is} (v_{st} - 2 \eta_0 \delta_{st}) \gamma^{jt}\}\right) \in \widetilde{\Gamma}_k
%\subset \Gamma
\mbox{ on }
\overline{\omega}_\delta.
\end{equation}
%since $\Gamma_{k + 1} \subset \Gamma_k (D u)$.
Actually, we can assume $D^2 v=\operatorname{diag}(\kappa_{1}'-2\theta,\dots,\kappa_{n-1}'-2\theta,2K)$ by rotating the coordinate system if necessary and note that $\gamma^{ij}=\operatorname{diag}(1,\dots,1,\frac{1}{w})$  at $0$ since $|Du(0)|=|u_{n}(0)|$. 
It follows that
\[
\left\{\frac{1}{w}(\gamma^{ik}D_{kl}v\gamma^{lj})\right\}
=\operatorname{diag}\left(\frac{\kappa_{1}'-2\theta}{w},\dots,\frac{\kappa_{n-1}'-2\theta}{w},\frac{2K}{w^3}\right).
\]
Let $\kappa_{v}=(\frac{\kappa_{1}'-2\theta}{w},\dots,\frac{\kappa_{n-1}'-2\theta}{w},\frac{2K}{w^3})=(\kappa_{v}',\frac{2K}{w^3})$. 
By \eqref{BC2-5} and $w<1$, we have $(\kappa_{v}',\frac{2K}{w})\in \widetilde{\Gamma}_k$. 
Then it follows from the ellipticity of $\sigma_k$ in $\widetilde{\Gamma}_{k}$ that $\kappa_{v}\in \widetilde{\Gamma}_k$. 
Hence we can find a suitable constant $\delta$ such that \eqref{BC2-4} holds.  

As \cite{JW22} and \cite{JaoSun22}, we consider the following barrier on $\overline{\omega}_\delta$, for sufficiently small $\delta$,
\begin{equation}
	\label{BC2-20}
	\Psi := v - td + \frac{N}{2} d^2,
\end{equation}
where $v(x)$ is defined by \eqref{BC2-6}, $d (x) := \mathrm{dist} (x, \partial \Omega)$ is the distance from $x$ to the boundary $\partial \Omega$,
and $t,N$ are two positive constants to be determined later. Since $f^{1/(k-l)}$ is concave in $\widetilde {\Gamma}_k$ and homogeneous of degree one and $|Dd| \equiv 1$ on the boundary $\partial \Omega$,  by \eqref{BC2-4} and \eqref{cj-2}, we have,
\[
\begin{aligned}
	&\frac{1}{k-1} \psi^{\frac{1}{k-l} - 1}G^{ij} (D^2 v - \eta_0 I + N D d \otimes D d)_{ij}\\
	\geq \,& G^{1/{(k-l)}} (D^2 v - \eta_0 I + N D d \otimes D d, Du)\\
	\geq \,& \mu (N) \mbox{ on } \overline{\omega}_\delta
\end{aligned}
\]
for some positive constant $\mu (N)$ satisfying $\lim_{N \rightarrow + \infty}\mu (N) = +\infty$.
We then have
\begin{equation}
	\label{gj-15}
	\begin{aligned}
		G^{ij} \Psi_{ij} 
		= \,& G^{ij} v_{ij} - t G^{ij} d_{ij} + N G^{ij} d_i d_j + N d G^{ij} d_{ij}\\
		\geq \,& (k-l) \psi^{1-1/{(k-l)}} \mu(N) + \eta_0 \sum_i G^{ii} + (N d - t) G^{ij} d_{ij}\\
		%\geq \,& n \epsilon_0^{1-1/n}\mu(N) + \eta_0 \sum_i G^{ii} + (N d - t) G^{ij} d_{ij}\\
		\geq \,& 2 \mu_1(N) + (\eta_0 - C N \delta - C t) \sum_i G^{ii}
		%  \geq \,& k B (N) \Big(\frac{\epsilon_0}{2}\Big)^{1-1/k} + \frac{\eta_0}{2} \sum F^{ii}
	\end{aligned}
\end{equation}
on $\overline{\omega}_\delta$, where $\mu_1 (N) := (k-l) (\inf \psi)^{1-1/(k-l)}\mu(N)/2$.
Define
\begin{equation}\label{new3.2}
	\tilde{W} := 1 - \exp\{- b W\}.
\end{equation}
By \eqref{BC2-15}, we can choose the constant $b$ large enough so that
\begin{equation}
	\label{new3.4}
	\begin{aligned}
		L \tilde{W} = \,& G^{ij} \big(-e^{-bW} b^2 W_i W_j + b e^{-bW} W_{ij}\big)\\
		\leq \,& b e^{-bW} \left[ C \left(1 + |D W| + \sum_i G^{ii} \right) + (C - b) G^{ij} W_i W_j\right]\\
		\leq \,&  C (1 + |D \tilde{W}| + \sum_i G^{ii}) + (C - b) G^{ij} W_i W_j b e^{-bW}\\
		\leq \,&  C (1 + |D \tilde{W}| + \sum_i G^{ii}).
	\end{aligned}
\end{equation}
We consider the function
\[
\Phi := R \Psi - \tilde{W},
\]
where $R$ is a large undetermined positive constant. We shall prove
\begin{equation}
	\label{add-6}
	\Phi \leq 0 \mbox{ on } \overline{\omega}_\delta
\end{equation}
by choosing suitable positive constants $\delta$, $t$, $N$ and $R$.

We first consider the case that the maximum of $\Phi$
is achieved at an interior point $x_0 \in \omega_\delta$. It follows that at $x_0$,
\[
|D\tilde{W}|=R |D\Psi|
\]
and if $N$ is sufficiently large and $\delta < \sqrt{\mu_1 (N)}/2CN$,
\[
|D \Psi| = |D v - t D d + N d D d| \leq C (1+t) + C\delta N \leq \mu_1(N)^{1/2} \mbox{ in } \omega_\delta.
\]
Therefore, by \eqref{gj-15}, provided $\delta$ and $t$ sufficiently small such that $C N \delta + C t < \eta_0/2$, we have
\begin{equation}
	\label{BC2-21}
	L \Psi \geq \mu_1 (N) + \mu_1(N)^{1/2} |D \Psi| + \frac{\eta_0}{2} \sum_i G^{ii}.
\end{equation}
By \eqref{new3.4} and \eqref{BC2-21} we obtain, at $x_0$,
\[
\begin{aligned}
	0 \geq L \Phi
	\geq \,& R\mu_1 (N) + R\mu_1(N)^{1/2} |D \Psi| + \frac{R\eta_0}{2} \sum G^{ii}\\
	& - C \left(1 + |D \tilde{W}| + \sum G^{ii}\right)\\
	\geq \,& R \mu_1(N) - C + R (\mu_1(N)^{1/2} - C)|D \Psi|  \\\,&+\left(\frac{R\eta_0}{2} - C\right)\sum G^{ii} > 0
	%> \,&0
\end{aligned}
\]
provided $N$ and $R$ are chosen sufficiently large which is a contradiction. Thus, the maximum of $\Phi$
is achieved at the boundary $\partial \omega_\delta$.
We may further assume $\delta < 2t/N$ so that
\begin{equation}\label{new3.1}
	- t d + \frac{N}{2} d^2 \leq 0 \mbox{ on } \overline{\omega}_\delta.
\end{equation}
By \eqref{BC2-12} and \eqref{new3.1},
we can conclude that $\Phi \leq 0$ on $\partial \omega_\delta$ by choosing $R$ larger and then \eqref{add-6} is proved.

Since $(R \Psi - \tilde{W}) (0) = 0$, we have $(R \Psi - \tilde{W})_n (0) \leq 0$. Therefore, we get
\[
u_{n \alpha} (0) \geq - C.
\]
The above arguments also hold for
\[
W = - \nabla'_\alpha u - \frac{1}{2} \sum_{1 \leq \beta \leq n - 1} u_\beta^2.
\]
Hence, we obtain \eqref{BC2-3}.

\noindent
{\bf Part 3.} The double normal derivative estimate on $\partial \Omega$, that is 
\begin{equation}
	\label{gj-16}
	|u_{nn} (0)| \leq C.
\end{equation}

At the origin, we may assume $\{u_{\alpha\beta}\}_{1\leq\alpha,\beta\leq n-1}$ is diagonal by a
rotation of axes if necessary. Note that 
\[
g^{ij}=\delta_{ij}+\frac{|Du|^2}{w^2}\delta_{in}\delta_{jn}
\]
at the origin, we have 
\[
a_{ij}=
\begin{pmatrix}
	\frac{u_{11}}{w} & 0 & \cdots & 0 & \frac{u_{1n}}{w} \\
	0 & \frac{u_{22}}{w} & \cdots & 0 & \frac{u_{2n}}{w} \\
	\vdots & \vdots & \ddots & \vdots & \vdots \\
	0 & 0 & \cdots & \frac{u_{n-1, n-1}}{w} & \frac{u_{n-1,n}}{w} \\
	\frac{u_{n1}}{w} & \frac{u_{n2}}{w} & \cdots & \frac{u_{n,n-1}}{w} & \frac{u_{nn}}{w^3}
\end{pmatrix}
\]
By Lemma 1.2 of \cite{CNSIII} and the estimates of $u_{\alpha\beta}(0)$, $u_{\alpha n}(0)$, there exists a constant $R_1 > 0$ sufficiently large such that if $u_{nn} > R_1$, then
\[
\begin{cases}
	\kappa_\alpha = \dfrac{u_{\alpha\alpha}}{w} + o(1), & \alpha = 1, \cdots, n - 1, \\[10pt]
	\kappa_n = \dfrac{u_{nn}}{w^3} + O(1),
\end{cases}
\]
where $\kappa_{1}, \ldots, \kappa_{n}$ are the eigenvalues of $\{a_{ij}\}$.

Since $k < n$, by choosing $R_{1}$ large enough,  we have at the origin, 
\begin{equation}\label{homo-case}
	\psi=\Big(\frac{\sigma_{k}}{\sigma_{l} }\Big) (\lambda(\eta))
	= \frac{C_{n-1}^k u_{nn}^k/(w^{3k}) +  O(u_{nn}^{k-1})}{C_{n-1}^l u_{nn}^l/(w^{3l}) + O(u_{nn}^{l-1})}. 
\end{equation}
It then implies the uniform upper bound of $|u_{nn}(0)|$. 
Combining \eqref{BC2-2} and \eqref{BC2-3}, we can obtain the uniform upper bound of $|u_{ij}(0)|$. Then \eqref{gj-16} follows immediately  and Theorem \ref{thm-boundary} is proved.

The last step is to estimate the second derivatives in the interior.

We need the following lemma of Urbas\cite{Urbas 03}.  As usual, unless otherwise specified, we assume  that the same indicator represents the summation.
\begin{lemma}
	\label{lem-U}
	The second fundamental form $h_{ab}$ satisfies
	\begin{equation}
		\label{GJ4.0}
		\begin{aligned}
			F^{ij}\nabla_{ij}h_{ab}=\,&-F^{ij,kl}\nabla_{a}h_{ij}\nabla_{b}h_{kl}-F^{ij}h_{ij}h_{ak}h_{bk}\\
			\,&+F^{ij}h_{ik}h_{jk}h_{ab}+\nabla_{ab}\psi
		\end{aligned}
	\end{equation}
	where
	\[
	F^{ij}=\frac{\partial f(\lambda(h))}{\partial h_{ij}}.
	\]
\end{lemma}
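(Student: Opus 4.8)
\medskip
\noindent\textbf{Proof strategy for Lemma~\ref{lem-U}.}
The plan is to differentiate the curvature equation \eqref{GJ1.1} twice on $M_u$ and then commute covariant derivatives using only the structural identities in \eqref{Gauss}, namely the Codazzi equation, the Ricci identity, and the Gauss equation. Since $f(\lambda(h))=\psi(x,u)$ holds on $M_u$ in the orthonormal frame $\{e_1,\dots,e_n\}$, a first covariant differentiation in a tangent direction $e_a$ gives $F^{ij}\nabla_a h_{ij}=\nabla_a\psi$, and differentiating once more in $e_b$ yields
\begin{equation}
\label{U-diff2}
F^{ij}\nabla_a\nabla_b h_{ij}=-F^{ij,kl}\,\nabla_a h_{ij}\,\nabla_b h_{kl}+\nabla_a\nabla_b\psi,
\end{equation}
so the first and last terms on the right-hand side of \eqref{GJ4.0} are already in hand; it remains only to rewrite $F^{ij}\nabla_i\nabla_j h_{ab}$ in terms of $F^{ij}\nabla_a\nabla_b h_{ij}$.

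For that step I would use the Codazzi equation $\nabla_k h_{ij}=\nabla_j h_{ik}$, which makes $\nabla h$ a totally symmetric $3$-tensor, to write $\nabla_j h_{ab}=\nabla_a h_{jb}$, so that $\nabla_i\nabla_j h_{ab}=\nabla_i\nabla_a h_{jb}$. Commuting $\nabla_i$ with $\nabla_a$ by the Ricci identity produces two Riemann curvature contractions, $F^{ij}R_{iajm}h_{mb}$ and $F^{ij}R_{iabm}h_{jm}$, while the remaining term becomes $F^{ij}\nabla_a\nabla_b h_{ij}$ after one more use of the total symmetry of $\nabla h$. Substituting the Gauss equation $R_{ijst}=-(h_{is}h_{jt}-h_{it}h_{js})$, each curvature contraction splits into a piece that is cubic in $h$ and a piece of the form $F^{ij}h_{ij}h_{ak}h_{bk}$ or $F^{ij}h_{ik}h_{jk}h_{ab}$; the two cubic pieces cancel — most transparently at a point where $h$, and hence $F^{ij}$, is diagonal — leaving exactly $-F^{ij}h_{ij}h_{ak}h_{bk}+F^{ij}h_{ik}h_{jk}h_{ab}$. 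Combining with \eqref{U-diff2} gives \eqref{GJ4.0}.

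The step needing the most care is the sign bookkeeping: one must pair the convention in the Ricci identity with the sign in the Gauss equation \eqref{Gauss} consistently, since an inconsistency would flip the two quadratic curvature terms. This $h$-quadratic block carries, in Minkowski space, the opposite (unfavourable) sign compared with the Euclidean case, and it is exactly the reason why the global $C^2$ estimate is so much harder here, as recalled in the introduction. Apart from this sign analysis the computation is standard; we follow Urbas \cite{Urbas03}, and an entirely parallel computation can be found in \cite{Bayard03}.
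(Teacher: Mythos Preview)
Your argument is correct and is exactly the standard derivation: differentiate the equation twice, use Codazzi to symmetrize $\nabla h$, commute derivatives via the Ricci identity, and substitute the Minkowski Gauss equation to produce the two curvature terms with the indicated signs. The paper itself does not give a proof of this lemma at all --- it simply attributes the identity to Urbas~\cite{Urbas 03} --- so your sketch in fact supplies more than the paper does and coincides with the computation in the cited references.
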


Suppose the maximum of $h_{\xi\xi}$ is
achieved at $X_0 = (x_0, u (x_0))\in M_u$ and $\xi_0 \in T_{X_0} M_u$.
We take point $X_{0}$ as the origin and establish a  local orthonormal frame $\{e_{1},e_{2},\cdots,e_{n}\}$ such that,
\[
\xi_0 = e_1,\ \nabla_{e_i} e_j = 0 \mbox{ at } X_0.
\]
We may also assume that $\{h_{ij}\}$ is diagonal at $X_0$ and furthermore,
\[
h_{11} \geq \cdots \geq h_{nn}
\]
by rotating the coordinate system if necessary.

Using \eqref{GJ4.0} and differentiating the equation \eqref{GJ1.1} twice, we have
\begin{equation}
	\label{GJ-11}
	0\geq\tilde{F}^{ii} \nabla_{ii}h_{11} = -\tilde{F}^{ij,kl}\nabla_{1}h_{ij}\nabla_{1}h_{kl} - \tilde{\psi} h_{11}^2 + h_{11} \tilde{F}^{ii} h_{ii}^2 + \nabla_{11} (\tilde{\psi}).
\end{equation}
where $\tilde{\psi}=\psi^{1/(k-l)}$ and
\[
\tilde{F}^{ij} = \frac{\partial (\frac{\sigma_{k}}{\sigma_{l}})^{1/(k-l)} (\lambda (h))}{\partial h_{ij}},\;\;\;
\tilde{F}^{ij,kl} = \frac{\partial^2 (\frac{\sigma_{k}}{\sigma_{l}})^{1/(k-l)} (\lambda (h))}{\partial h_{ij}\partial h_{kl}}.
\]
Thus, it follows from \eqref{ile} and the concavity of $(\frac{\sigma_{k}}{\sigma_{l}})^{1/(k-l)}(\lambda (h))$ that
\[
0\geq c(n,k,l) \psi^{1 - 1/{(k-l)}} h_{11}^3 - \tilde{\psi} h_{11}^2 + \nabla_{11} (\tilde{\psi})\geq  c(n,k,l)\psi^{1 - 1/{(k-l)}} h_{11}^3 - Ch_{11}^2
\]
provided $h_{{11}}$ large enough. Then a bound for $h_{11}$ is derived from the above inequality.

Combined with the boundary estimates of the second-order derivatives \eqref{B2-0}, we have 
\begin{equation}
	\label{B2-01}
	\max_{\ol \Omega} |D^2 u| \leq C.
\end{equation}
We have done.

\section{$C^2$ estimates for general boundary data}
For this part, it suffices to adjust the proof for the boundary in Section 5.

\noindent
{\bf Part 1.} We now differentiate the boundary condition $u = \varphi$ twice on $\partial \Omega$ to derive the double-tangent estimates. 

\noindent
{\bf Part 2.} In the mixed tangential-normal derivative estimate, we have used the fact that
$\gamma^{ij}=\operatorname{diag}(1,\dots,1,\frac{1}{w})$  at an fixed boundary point since $|Du(0)|=|u_{n}(0)|$, which is not true for $\varphi\neq$ constant. 
Here we shall illuminate \eqref{BC2-4} for general $\varphi$.

Let $A=\{a_{ij}\}$ be a matrix with $|a_{nn}|\rightarrow +\infty$ and other terms bounded.
Given a boundary point %$x_{0}\in \partial \Omega$, 
$x_{0}\in \partial\Omega$,
we have 
\[
g^{-1}=\{\delta_{ij}+\frac{u_{i}u_{j}}{w^2}\}
\]
and
\[
g^{-1}\frac{A}{w}=\frac{1}{w}
\begin{pmatrix}
	o(|a_{nn}|)   & \cdots                   & o(|a_{nn}|)            & \frac{u_{1}u_{n}}{w^2}a_{nn}+o(|a_{nn}|) \\
	\vdots                       & \ddots                       & \ddots        & \frac{u_{2}u_{n}}{w^2}a_{nn}+o(|a_{nn}|)                  \\
	\vdots                  & \ddots           & \ddots           & \vdots \\
	\vdots                 & \ddots       & o(|a_{nn}|)              & \frac{u_{n-1}u_{n}}{w^2}a_{nn}+o(|a_{nn}|) \\
	o(|a_{nn}|)  & \cdots                       & o(|a_{nn}|)             & (1+\frac{u_{n}^{2}}{w^2})a_{nn}+o(|a_{nn}|)
\end{pmatrix},
\]
which means all entries of $g^{-1}\frac{A}{w}$ are $o(|a_{nn}|)$, except those in the $n$-th column.

Consider the matrix $B:=\operatorname{tr}(g^{-1}\frac{A}{w}) I-g^{-1}\frac{A}{w}$, we have
\[
B=
\frac{1}{w}
\begin{pmatrix}
	(1+\frac{u_{n}^{2}}{w^2})a_{nn}+o(|a_{nn}|)   & \cdots                   & o(|a_{nn}|)            & -\frac{u_{1}u_{n}}{w^2}a_{nn}+o(|a_{nn}|) \\
	\vdots                       & \ddots                       & \ddots        & -\frac{u_{2}u_{n}}{w^2}a_{nn}+o(|a_{nn}|)                  \\
	\vdots                  & \ddots           & \ddots           & \vdots \\
	\vdots                 & \ddots       &  (1+\frac{u_{n}^{2}}{w^2})a_{nn}+o(|a_{nn}|)              & -\frac{u_{n-1}u_{n}}{w^2}a_{nn}+o(|a_{nn}|) \\
	o(|a_{nn}|)  & \cdots                       & o(|a_{nn}|)             & o(|a_{nn}|)
\end{pmatrix}.
\]
By standard mathematical induction, we have
\[
B^{k}=
\frac{1}{w^{k}}
\begin{pmatrix}
	[(1+\frac{u_{n}^{2}}{w^2})a_{nn}]^{k}+o_{k}   & \cdots                   & o_{k}            & -(1+\frac{u_{n}^{2}}{w^2})^{k-1}\frac{u_{1}u_{n}}{w^2}a_{nn}^{k}+o_{k} \\
	\vdots                       & \ddots                       & \ddots        & -(1+\frac{u_{n}^{2}}{w^2})^{k-1}\frac{u_{2}u_{n}}{w^2}a_{nn}^{k}+o_{k}                  \\
	\vdots                  & \ddots           & \ddots           & \vdots \\
	\vdots                 & \ddots       &  [(1+\frac{u_{n}^{2}}{w^2})a_{nn}]^{k}+o_{k}              & -(1+\frac{u_{n}^{2}}{w^2})^{k-1} \frac{u_{n-1}u_{n}}{w^2} a_{nn}^{k}+o_{k} \\
	o_{k}  & \cdots                       & o_{k}             & o_{k}
\end{pmatrix},
\]
where $o_{k}=o(|a_{nn}|^{k})$. 
Thus $\operatorname{tr}(B^k)=(n-1)[(1+\frac{u_{n}^{2}}{w^2})a_{nn}]^{k}+o_{k}$.

Note that $\operatorname{tr}(B^k)=p_k$, by Newton's Formulas we have
\[
\sigma_{1}=p_1=(n-1)\frac{(w^2+u_{n}^2)a_{nn}}{w^{3}}+o_{1}:=(n-1)a+o_{1}
\]
and
\[
\sigma_{2}=\frac{1}{2}(\sigma_{1}p_1-p_2)=\frac{(n-1)(n-2)}{2}a^2+o_{2}=C^{2}_{n-1}a^{2}+o_{2}.
\]
Again by standard mathematical induction, for $k<n$, we have 
\begin{equation}\label{alg-str}
	\sigma_{k}=C^{k}_{n-1}a^{k}+o_{k}=C_{n-1}^{k}\frac{1}{w^{3k}}(w^2+u_{n}^2)^{k}a_{nn}^{k}+o(|a_{nn}|^{k}).
\end{equation}
Since \(w^2+u_n^2>0\) from the gradient estimates, we conclude that \eqref{BC2-4} holds by applying \(A=D^2v-2\eta_0I\).

Similarly, we can derive the estimates for mixed tangential-normal derivatives on the boundary.

\noindent
{\bf Part 3.}
We now only need to establish the second order double-normal estimates on the boundary.
By the same argument applied to \(A=D^2u\) in Part 2, we have
\[
\sigma_{k}=C_{n-1}^{k}\frac{1}{w^{3k}}(w^2+u_{n}^2)^{k}u_{nn}^{k}+o(|u_{nn}|^{k}).
\] 
Thus the algebraic structure of $\eta$-type curvature quotient operator ($k<n$) with general boundary data is
\begin{equation}\label{general-case}
	\psi=\Big(\frac{\sigma_{k}}{\sigma_{l} }\Big) (\lambda(\eta))
	= \frac{C_{n-1}^k (w^2+|u_n|^2)^{k}u_{nn}^k/(w^{3k}) +  O(|u_{nn}|^{k-1})}{C_{n-1}^l (w^2+|u_n|^2)^{l} u_{nn}^l/(w^{3l}) + O(|u_{nn}|^{l-1})},  
\end{equation}
then we have finished the proof, since $w^2+u_{n}^2>0$ by the gradient estimates.

\section{Proof of the main result}

The proof of Theorem \ref{GJ-thm1} relies on a standard continuity method (see \cite{CNSV}). 
%In Section 3, we construct a spacelike function $\ul u$ satisfying \[\left\{\begin{aligned}	&\frac{\sigma_{k}}{\sigma_{l}} \left( \lambda (\eta [M_{\ul u}]) \right) = \psi^{0}(x,\ul u) \geq \psi(x, u) &&\mbox{in}~\Omega, \\	&\ul u = 0 &&\mbox{on}~\partial\Omega. \end{aligned}\right.\]
By assumption, $\varphi$ has an admissible extension, we still denote it by $\varphi$. 
Let $$\psi^{0}(x,\varphi) = \frac{\sigma_k}{\sigma_l} \left( \lambda (\eta [M_{\varphi}]) \right)$$ and extend $\psi^{0}$ to $\Omega\times \mathbb{R}$ with $\psi^{0}(x,z)=\psi^{0}(x,\varphi)$.
Extend $\psi$ in the same way, 
 we consider, for each \( t \in [0,1] \),
\begin{equation}
	\label{continuity}
	\left\{
	\begin{aligned}
		&\frac{\sigma_k}{\sigma_l} \left( \lambda (\eta [M_{u^t}]) \right) = t \psi + (1-t)\psi^0 &&\mbox{in}~\Omega,\\
		&u^t= \varphi &&\mbox{on}~\partial \Omega.
	\end{aligned}
	\right.
\end{equation}
The set \( S \) consists of numbers \( t \in [0,1] \) such that there exists a function \( u^t \in C^{2}(\overline{\Omega}) \), which is an admissible solution of \eqref{continuity}.

Obviously, $\varphi$ is the solution of \eqref{continuity} for $t=0$, so the set \( S \) is nonempty. 
We shall prove that $S$ is both open and close, so $S=[0,1]$ and the proof is finished. 
First, by the implict function theorem, we can obtain $S$ is an open set. Next, let \( \{s_i\}_{i \in \mathbb{N}} \) be an arbitrary sequence from the set \( S \) and \( s \in [0,1] \) satisfies \( \lim_{i \to \infty} s_i = s \), we shall prove \( s \in S \).
Combine with the estimates
\[
     \sup_{\overline{\Omega}}|D u^t| \leq 1 - \theta_0 <1
\]
and
\[
	\| u^t \|_{C^2(\overline{\Omega})} \leq C,  
\]
we obtain the uniform ellipticity of $\frac{\sigma_{k}}{\sigma_l} (\lambda(\eta))$. 

Since $(\frac{\sigma_{k}}{\sigma_l} (\lambda(\eta)))^{\frac{1}{k-l}}$ is concave (see \cite{S05}) in $\widetilde{\Gamma}_{k}$, then by Evans--Krylov theorem, we can obtain a $C^{2,\alpha}$ estimate, %(see \cite{TrudingerBook}). 
which means
\[
\| u^{s_i} \|_{C^{2,\alpha}(\overline{\Omega})} \leq C, 
\]
where \( C \) is independent on \( s_i \). By the Arzela-Ascoli theorem, we have \( u^s\in C^{2,\delta}(\overline{\Omega}) \) for \( \delta \in (0, \alpha) \).
Therefore, \( s \in S \) and \( S \) is closed. 

In addition, by standard nonlinear elliptic regularity, we can obtain \( u \in C^\infty(\overline{\Omega}) \).

The uniqueness of the solution follows from the maximum principle.

%\emph
%\begin{proof}[\bf Proof of theorem 1.3]	Combined \eqref{jg-7'}, \eqref{spacelike}, \eqref{B2-01}, we can obtain	\begin{equation}		\label{B2-011}		||u||_{C^2(\ol{\Omega})} \leq C.	\end{equation}Applying the Evans-Krylov theory, we can obtain the $C^{2,\alpha}$ estimates for $u$.Using the continuity method and the maximum principle, we can finally establish the existence and uniqueness of solutions to \eqref{GJ1.3}.\end{proof}

%\begin{remark}
%We emphasize that in the case $k<n$, the suitable function $\underline{\psi}$ in \cite{JS22} can be ignored to derive boundary double-normal second order derivative estimates.
%In \cite{JS22}, Jiao-Sun need the existence of a suitable function $\underline{\psi}(x,z)\in C^0(\overline{\Omega}\times\mathbb{R})$ to prove that $-u_n(0)$ has a positive lower bound, which will used to derive boundary normal-normal second order derivative estimates. But we do not need the existence of $\underline{\psi}$ in Theorem \ref{main} since we only consider the case that $k<n$.
%\end{remark}

\section*{Appendix}

\subsection*{Proof of Lemma \ref{BC2-lem1}}

\;

%The proof of lemma \ref{BC2-lem1} is similar to those of Lemma 5.5 in  \cite{GJ25} and Lemma 5.2 in \cite{CTX25}, for reader's convenience, we provide a detailed proof here.

By differentiating the equation \eqref{1-1-1}, we have
	\begin{equation}
		\label{BC2-16.}
		\begin{aligned}
			L W + G^s W_s
			\leq \,& C + C \sum_i G^{ii} + C \sum_s G^{s} + 2 G^{ij} u_{ni} \rho_{\alpha j} \\
			&%- 2 \sum_{\beta \leq n-1} G^{ij} u_{\beta i} \varphi_{\beta j} 
			- \sum_{\beta \leq n-1} G^{ij} u_{\beta i} u_{\beta j}.
		\end{aligned}
	\end{equation}
	By
	\[
	a_{ij} = \frac{1}{w}\gamma^{ik}u_{kl} \gamma^{lj},
	\]
	we can obtain
	\[
	G^{ij} = \frac{\partial G}{\partial u_{ij}} = F^{st}\frac{\partial a_{st}}{\partial u_{ij}} = \frac{1}{w}\sum_{s,t} F^{st} \gamma^{is} \gamma^{tj}
	\] 
	%\mbox{ and }
	and
	\[
	u_{ij} = w\sum_{s,t} \gamma_{is} a_{st} \gamma_{tj}.
	\]
	It follows that
	\[
	\sum_{\beta \leq n - 1} G^{ij} u_{\beta i} u_{\beta j}
	= w \sum_{\beta \leq n - 1} \sum_{s,t}F^{ij} \gamma_{\beta s} \gamma_{\beta t} a_{si} a_{tj}
	\]
	and
	\[
	\frac{1}{w} \sum_{i} F^{ii}\leq\sum_{i}G^{ii}\leq \frac{1}{w^3} \sum_{i} F^{ii}.
	\]
	By \cite{GS04}, we can find an orthogonal matrix $B = (b_{ij})$ which can diagonalize $(a_{ij})$ and $(F^{ij})$ at the same time:
	\[
	F^{ij} =\sum_s b_{is} f_s b_{js} \mbox{ and } a_{ij} =\sum_s b_{is} \kappa_s b_{js}.
	\]
	Therefore, we arrive
	\begin{equation}
		\begin{aligned}
			\sum_{\beta \leq n - 1} G^{ij} u_{\beta i} u_{\beta j} =& w \sum_{\beta \leq n - 1} F^{ij} \gamma_{\beta s} \gamma_{\beta t} a_{si} a_{tj}\\\notag	
			=& w \sum_{\beta \leq n - 1} \sum_i\left(\sum_s\gamma_{\beta s} b_{si}\right)^2  f_i \kappa_i^2\notag.
		\end{aligned}
	\end{equation}
	Let the matrix $\eta = (\eta_{ij}) = (\sum_s\gamma_{is} b_{sj})$. We can easy to know $\eta \cdot \eta^T = g$ and $|\det (\eta)| = \sqrt{1 - |Du|^2}$.
	Hence, we obtain
	\begin{equation}
		\label{BC2-14.}
		\sum_{\beta \leq n - 1} G^{ij} u_{\beta i} u_{\beta j}
		= w \sum_{\beta \leq n - 1} \sum_i\eta_{\beta i}^2  f_i \kappa_i^2.
	\end{equation}
	We have
	\begin{equation}
		\label{BC2-19.}
		G^{ij} u_{ni} \rho_{\alpha j} = \sum_{i,t}f_i \kappa_i b_{si} \gamma^{js} b_{ti} \gamma_{nt} \rho_{\alpha j} \leq C \sum_i f_i |\kappa_i|.
	\end{equation}
	%	and
	%	\begin{equation}
		%		\label{BC2-19.1.}
		%		G^{ij} u_{\beta i} \varphi_{\beta j}= \sum_{i,t}f_i \kappa_i b_{si} \gamma^{js} b_{ti} \gamma_{ \beta t} \varphi_{\beta j} \leq C \sum_i f_i |\kappa_i|.
		%	\end{equation}
	For any indices  $j, t$, we have
	\[
	F^{ij} a_{it} =\sum_{i,s,p} b_{is} f_s b_{js} b_{ip} \kappa_p b_{tp} = \sum_i f_i \kappa_i b_{ji} b_{ti}.
	\]
	Thus,
	by \eqref{GS-2}, we find
	\begin{equation}
		\label{BC2-11.}
		\begin{aligned}
			\mid \sum_s G^s \mid =& \bigg| \sum_s \biggl\{ \frac{u_s}{w^2} \sum_i f_i \kappa_i + \frac{2}{w (1+w)} \sum_{t,j}F^{ij} a_{it} \big(w u_t \gamma^{sj} 
			+ u_j \gamma^{ts}\big) \biggr\} \bigg|\\\notag
			\leq&  C \sum_i f_i |\kappa_i|.
		\end{aligned}
	\end{equation}
	Combining \eqref{BC2-16.}-\eqref{BC2-11.},
	we obtain
	\begin{equation}
		\label{BC2-26.}
		\begin{aligned}
			LW + G^s W_s
			\leq C \left(1 + \sum_i G^{ii} + \sum_i f_i |\kappa_i|\right) - w \sum_{\beta \leq n - 1} \sum_i\eta_{\beta i}^2 f_i \kappa_i^2.
		\end{aligned}
	\end{equation}
	Now we consider the term $G^s W_s$. We have, by \eqref{GS-2} and the definition of the matrix $(b_{ij})$,
	\begin{equation}
		\label{BC2-17.}
		\begin{aligned}
			- G^s W_s = \,& -\frac{u_s}{w^2} \sum_i f_i \kappa_i W_s - \frac{2}{w (1+w)} \sum_{t,j}F^{ij} a_{it} \big(w u_t \gamma^{sj} 
			+ u_j \gamma^{ts}\big) W_s\\
			= \,& -\frac{1}{w} \sum_s\left((k-l) \psi u_s\frac{1}{w} + 2 \sum_{t,i}f_i \kappa_i (b_{ti} u_t) \gamma^{sl} b_{li}\right) W_s\\
			\leq \,& C |D W| - \frac{2}{w} \sum_{t,i}f_i \kappa_i (b_{ti} u_t) \gamma^{sl} b_{li} W_s.
		\end{aligned}
	\end{equation}
	We further discuss in two situations: (a) $\sum_{\beta \leq n - 1} \eta_{\beta i}^2 \geq \epsilon$ for all $i$; 
	and (b) $\sum_{\beta \leq n - 1} \eta_{\beta r}^2 < \epsilon$ for some index $1 \leq r \leq n$, where
	$\epsilon$ will be chosen later.
	
	{\bf Case (a)}. 
	By \eqref{BC2-14.}, we have
	\[
	\sum_{\beta \leq n - 1} G^{ij} u_{\beta i} u_{\beta j}
	\geq \epsilon w \sum_{i} f_i \kappa_i^2.
	\]
	By Cauchy-Schwarz inequality, for any $\epsilon_0 > 0$, we have
	\begin{equation}
		\label{BC2-31.}
		\frac{2}{w} \kappa_i (b_{ti} u_t) \gamma^{sl} b_{li} W_s
		\geq -\frac{\epsilon_0}{2} \kappa_i^2 - \frac{C}{\epsilon_0 } (\gamma^{sl} b_{li} W_s)^2.
	\end{equation}
	Then
	\[
	- G^s W_s \leq C |D W| + \frac{\epsilon_0}{2} f_i \kappa_i^2 + \frac{C}{\epsilon_0} G^{ij} W_i W_j.
	\]
	Obviously, for any $\epsilon_1 > 0$,
	$$\sum_i f_i |\kappa_i|\leq \frac{1}{2\epsilon_1} \sum_i f_i +  \frac{\epsilon_1}{2} \sum_i f_i \kappa_i^2
	\leq C\left(\frac{1}{\epsilon_1} \sum_i G^{ii}+ \epsilon_1 \sum_i f_i \kappa_i^2\right).$$
	%Let $\epsilon>\frac{\epsilon_0+\epsilon_1}{2}$,
	Combining the previous four inequalities with \eqref{BC2-26.}, \eqref{BC2-15} follows.

	{\bf Case (b)}.
	By lemma 4.3 of Bayard \cite{Bayard03} , 
	for any $i\neq r$,
	$$\sum_{\beta\leq n-1}\eta^2_{\beta i}\geq c_1$$ for some positive constant $c_1$ depending on $\|u\|_{C^1 (\overline{\Omega})}$ and $n$. 
	
	Then, in view of \eqref{BC2-14.},
	\begin{equation}
		\label{BC2-13.}
		\sum_{\beta \leq n - 1} G^{ij} u_{\beta i} u_{\beta j} \geq w c_1 \sum_{i \neq r} f_i  \kappa_i^2.
	\end{equation}
	
	If $\kappa_r \leq 0$, by Lemma 2.7 of \cite{Guan14}, we have
	\[
	\sum_{i \neq r} f_i  \kappa_i^2
	\geq \frac{1}{n+1} \sum_{i = 1}^n f_i \kappa_i^2.
	\]
	Thus \eqref{BC2-15} follows using a similar argument as the Case (a).
	
	So next, we may assume $\kappa_r > 0$. Without loss of generality, assume that $\kappa_1 \geq \cdots \geq \kappa_n$. Let  $\lambda = (\lambda_1, \cdots, \lambda_n)$ be the eigenvalues of $\eta[M_u]$, then $\lambda_i = \sum_{j \neq i} \kappa_j$. It follows that $\lambda_1 \leq \cdots \leq \lambda_n$. Then we consider two subcases.
	
	\noindent
	{\bf Case (b-1).}
	$\kappa_n \geq - \epsilon_0 \kappa_r$, where $\epsilon_0$ is a positive constant to be chosen later.
	
	In order to derive \eqref{BC2-15}, the key is to prove the important inequality
	\begin{equation}\label{claim}
		f_r \kappa_r \leq C.
	\end{equation}
	
	\emph{Proof of \eqref{claim}:}
	If $\kappa_n \geq -\epsilon_0 \kappa_r$, then for any $i \neq r$, by choosing $\epsilon_0 \leq \frac{1}{2(n-2)}$ we derive
	\begin{equation}\label{b21}
		\lambda_i = \sum_{j \neq i}\kappa_j \geq[1-(n-2)\epsilon_0] \kappa_r \geq \frac{1}{2} \kappa_r > 0.
	\end{equation}
	
	When $r \neq 1$, it is obvious that
	$$\lambda_n \geq \cdots \geq \lambda_1 \geq[1-(n-2) \epsilon_0] \kappa_r \geq \frac{1}{2} \kappa_r > 0.$$
	Thus
	\begin{equation}
		\begin{aligned}
			\label{fr}
			f_r \kappa_r & = \sum_{p \neq r}
			\frac{\sigma_{k-1; p}(\eta) \sigma_l(\eta) - \sigma_k(\eta) \sigma_{l-1; p}(\eta)}{\sigma_l(\eta)^2} \kappa_r\\
			& \leq (n-k+1) \frac{\sigma_{k-1}(\eta)}{\sigma_l(\eta)} \kappa_r\\
			& \leq \frac{(n-k+1) C_n^{k-1}}{\sigma_l(\eta)} \lambda_n \cdots \lambda_{n-k+2} \kappa_r\\
			& \leq \frac{2(n-k+1) C_n^{k-1}}{\sigma_l(\eta)} \lambda_n \cdots \lambda_{n-k+2} \lambda_{n-k+1}\\
			& \leq C \frac{\sigma_k(\eta)}{\sigma_l(\eta)}\\ 
			& = C \psi.
		\end{aligned}
	\end{equation}
	When $r = 1$, i.e., $\kappa_n \geq - \epsilon_0 \kappa_1$. By \eqref{b21},
	$$\lambda_i \geq [1-(n-2) \epsilon_0] \kappa_1 > 0, \quad \forall ~ i \geq 2.$$
	Next we consider two cases: Case $\lambda_1 \geq 0$ and Case $\lambda_1 < 0$. 
	
	If $\lambda_1 \geq 0$, similar to \eqref{fr}, it is easy to derive $f_r \kappa_r \leq C \psi$. 
	
	If $\lambda_1 < 0$, then
	$$0 > \lambda_1 =\sum_{i \geq 2} \kappa_i \geq (n-1) \kappa_n \geq - (n-1) \epsilon_0 \kappa_1.$$
	By $\lambda_1 < 0$, we get $\sigma_{k-1}(\eta) \leq \sigma_{k-1; 1}(\eta)$. Thus
	\begin{equation}
		\begin{aligned}
			\label{kap}
			f_1 \kappa_1 & \leq (n-k+1) \frac{\sigma_{k-1}(\eta)}{\sigma_l(\eta)} \kappa_1 \\
			& \leq (n-k+1) \frac{\sigma_{k-1; 1}(\eta)}{\sigma_l(\eta)} \kappa_1\\
			& \leq \frac{(n-k+1) C_{n-1}^{k-1}}{[1-(n-2) \epsilon_0] \sigma_l(\eta)} \lambda_n \cdots \lambda_{n-k+2} \lambda_{n-k+1}.
		\end{aligned}
	\end{equation}
	
	Note that
	\begin{equation}
		\begin{aligned}
			\label{sig}
			\sigma_k(\eta) & = \sigma_{k; 1}(\eta) + \lambda_1 \sigma_{k-1; 1}(\eta)\\
			& \geq \lambda_n \cdots \lambda_{n-k+1} + \lambda_1 \cdot  C_{n-1}^{k-1} \lambda_n \cdots \lambda_{n-k+2}\\
			& = (\lambda_{n-k+1} + C_{n-1}^{k-1} \lambda_1) \lambda_n \cdots \lambda_{n-k+2}.
		\end{aligned}
	\end{equation}
	and
	$$\lambda_{n-k+1} \geq [1 - (n-2) \epsilon_0] \kappa_1 \geq - \frac{1-(n-2) \epsilon_0}{(n-1) \epsilon_0} \lambda_1,$$
	since $n-k+1 \geq 2$. Therefore by choosing $\epsilon_0 \leq \frac{1}{n - 2 + 2 (n-1) C_{n-1}^{k-1}}$ sufficiently small, we get
	\begin{equation}\label{12}
		\frac{1}{2}\lambda_{n-k+1} + C_{n-1}^{k-1} \lambda_1 \geq - \lambda_1 \left(\frac{1 - (n-2) \epsilon_0}{2 (n-1) \epsilon_0} - C_{n-1}^{k-1}\right) \geq 0.
	\end{equation}
	Which implies $\sigma_{k}(\eta) \geq \frac{1}{2} \lambda_n \cdots \lambda_{n-k+1}$. Combining with \eqref{kap}-\eqref{12}, \eqref{claim} still holds.
	
	By \eqref{BC2-17.} and \eqref{BC2-31.} and the Cauchy-Schwarz inequality, we have, for any $\epsilon > 0$,
	\begin{equation}
		\label{BC2-30.}
		\begin{aligned}
			- G^s W_s \leq \,& C |DW| - \frac{2}{w} \sum_{i \neq r} f_i \kappa_i (b_{ti} u_t) \gamma^{sl} b_{li} W_s\\
			\leq \,& C |DW| + C \sum_{i \neq r} f_i |\kappa_i| |\gamma^{sl} b_{li} W_s| \\
			\leq \,& C |DW| + \epsilon \sum_{i \neq r} f_i \kappa_i^2
			+ \frac{C}{\epsilon} \sum_{i=1}^n f_i \gamma^{sl} b_{li} W_s \gamma^{tk} b_{ki} W_t\\
			\leq \,& C |DW| + \epsilon \sum_{i \neq r} f_i \kappa_i^2 + \frac{C}{\epsilon} G^{ij} W_i W_j.
		\end{aligned}
	\end{equation}
	By using \eqref{BC2-13.} and  fixing $\epsilon$ sufficiently small, we can prove \eqref{BC2-15}.

	\noindent
	{\bf Case (b-2).}  $\kappa_n < - \epsilon_0 \kappa_r$.
	
	Since $\kappa_r > 0$, it is obvious that $r \neq n$.
	In this subcase, we get
	\begin{equation}
		\begin{aligned}
			\gamma^{sj} b_{jr} W_s & = \gamma^{sj} b_{jr} (u_{\alpha s} + \rho_{\alpha s} u_n + \rho_{\alpha} u_{ns} - \sum_{\beta\leq n-1} u_{\beta} u_{\beta s}) \\\notag
			& = w \kappa_r (\eta_{\alpha r} + \rho_{\alpha} \eta_{nr} - \sum_{\beta \leq n-1} u_{\beta} \eta_{\beta r}) + \gamma^{sj} b_{jr} \rho_{\alpha s} u_n\notag.
		\end{aligned}
	\end{equation}
	It follows that
	\begin{equation}
		\label{fol}
		|\gamma^{sj} b_{jr} W_s| \leq C w \kappa_r (\epsilon + |\rho_{\alpha}|) + C.
	\end{equation}
	It is obvious that $f_r \kappa_r = (k-l) \psi - \sum_{i\neq r} f_i \kappa_i$, thus by $\kappa_n^2 > \epsilon_0^2 \kappa_r^2$, 
	\begin{equation}
		\begin{aligned}
			\frac{2}{w} f_r \kappa_r |(\sum_t b_{tr} u_t) \gamma^{sj} b_{jr} W_s| 
			= \,& \frac{2}{w}((k-l) \psi - \sum_{i \neq  r} f_i \kappa_i) \left|\left( \sum_t b_{tr} u_t \right) \gamma^{sj} b_{jr}W_s \right|\\\notag
			\leq \,& C |DW| + C (\epsilon + |\rho_{\alpha}|)\sum_{i \neq r } f_i |\kappa_i| \kappa_r + C \sum_{i \neq r} f_i|\kappa_i|\\\notag
			\leq \,& C |DW| + C \epsilon_0^{-1} (\epsilon + |\rho_{\alpha}|) \sum_{i \neq r } f_i \kappa_i^2\\\notag
			& +  C \epsilon_0 (\epsilon + |\rho_{\alpha}|) \sum_{i \neq r } f_i \kappa_r^2 + C \epsilon_1 \sum_{i \neq r} f_i \kappa_i^2 + \frac{C}{\epsilon_1} \sum_{i \neq r} f_i\\\notag
			\leq \,& C |DW| + (C \epsilon_0^{-1} (\epsilon + |\rho_{\alpha}|) + \epsilon_1) \sum_{i \neq r} f_i \kappa_i^2 + \frac{C}{\epsilon_1} \sum_i G^{ii}\notag
		\end{aligned}
	\end{equation}
	for any $\epsilon_1>0$. Here the last inequality comes from $\sum_{i\neq r}f_i\kappa_r^2\leq \frac{n-1}{\epsilon_0^2}f_n\kappa_n^2\leq \frac{n-1}{\epsilon_0^2}\sum_{i\neq r}f_i\kappa_i^2$ with $r\neq n$.

	We can choose sufficiently small $\delta$, $\epsilon$ and $\epsilon_1$ satisfying
	\[
	\left(\epsilon_0^{-1} C (\epsilon + |\rho_\alpha|) + \epsilon_1\right) < \frac{m c_1}{4},
	\]
	where $m=1-\theta_0^2$, $\theta_0$ is defined in spacelike. 
	Therefore, as in Case (b-1), \eqref{BC2-15} follows by \eqref{BC2-13.}.

\bigskip

{\bf Acknowledgment.} We would like to thank Rirong Yuan for pointing out that $\tilde{\Gamma}_k$ is of type 2 when $k<n$. We also thank Heming Jiao and Zhizhang Wang for their helpful comments and valuable suggestions.
 
{\bf Conflict Of Interest Statement.} The authors declare that there is no conflict of interest.

{\bf Data Availability Statement.} The authors declare that this study only contains theoretical derivations and mathematical proofs, with no data generated or analyzed.


\begin{thebibliography}{99}

%\bibitem{Andrews94}
%B. Andrews,
%{\em Contraction of convex hypersurfaces in Euclidean space},
%Calc. Var. PDE  {\bf 2} (1994), 151--171.

\bibitem{Bartnik82} R. Bartnik and L. Simon, {\em Spacelike hypersurfaces with prescribed boundary values and mean curvature}, Comm. Math. Phys. {\bf 87} (1982), 131--152.



%\bibitem{Bartnik84} R. Bartnik, {\em Existence of maximal surfaces in asymptotically flat spacetimes}, Comm. Math. Phys. {\bf 94} (1984), 155--175.
%
%\bibitem{Bartnik88} R. Bartnik, {\em Regularity of variational maximal surfaces}, Acta Math. {\bf 161} (1988), 145--181.

\bibitem{Bayard03} P. Bayard, {\em Dirichlet problem for space-like hypersurfaces with prescribed scalar curvature in $\mathbb{R}^{n,1}$}, Calc. Var. Partial Differ. Equ. {\bf 18} (2003), 1--30.

\bibitem{CNSI} L. Caffarelli, L. Nirenberg and J. Spruck, {\em The {D}irichlet problem for nonlinear second-order elliptic equations I. Monge-Amp\`{e}re equation.}, Comm. Pure Appl. Math.  {\bf 37} (1984), 369--402.

\bibitem{CNSIII} L. Caffarelli, L. Nirenberg and J. Spruck, {\em The {D}irichlet problem for nonlinear second-order elliptic
	equations {III}., {F}unctions of the eigenvalues of the	{H}essian}, Acta Math.  {\bf 155} (1985), 261--301.

\bibitem{CNSV} L. Caffarelli, L. Nirenberg and J. Spruck, {\em Nonlinear second-order elliptic equations V. The
	Dirichlet problem for Weingarten hypersurfaces}, Comm. Pure Appl. Math. {\bf 41} (1988), 41--70.
	
	
%\bibitem{BJT13} C. Bereanu, P. Jebelean, P.J. Torres, {\em Positive radial solutions for Dirichlet problems with mean curvature operators in Minkowski space}, J. Funct. Anal. {\bf 264} (2013), 270--287.
\bibitem{CDH23}
C. Chen, W. Dong and F. Han, {\em Interior Hessian estimates for a class of Hesian type equations}, Calc. Var.
Partial Differ. Equ. {\bf 62} (2023), 52.

\bibitem{CTX21} L. Chen, Q. Tu and N. Xiang, {\em Pogorelov type estimates for a class of Hessian quotient equations}, J. Differ. Equ. {\bf 282} (2021), 272--284.

\bibitem{CTX20} X. Chen, Q. Tu and N. Xiang, {\em A class of Hessian quotient equations in Euclidean space}, J. Differ. Equ. {\bf 269} (2020), 11172--11194.

\bibitem{CTX23.} X. Chen, Q. Tu and N. Xiang, {\em The Dirichlet problem for a class of Hessian quotient equations on Riemannian manifolds}, Int. Math. Res. Not. (2023),  10013--10036.




%\bibitem{CNSIV} L. A. Caffarelli, L. Nirenberg and J. Spruck, {\em Nonlinear second order elliptic equations. IV. Starshaped compact Weingarten hypersurfaces}, Current topics in partial differential equations, 1--26. Kinokuniya, Tokyo, 1986.



%\bibitem{COOR13} C. Corsato, F. Obersnel, P. Omari, S. Rivetti, {\em Positive solutions of the Dirichlet problem for the prescribed mean curvature equation in Minkowski space}, J. Math. Anal. Appl. {\bf 405} (2013), 227--239.
%
%\bibitem{COOR 13} C. Corsato, F. Obersnel, P. Omari, S. Rivetti, {\em On the lower and upper solution method for the prescribed mean curvature equation in Minkowski space}, DCDS Suppl., Special Issue. {\bf } (2013), 159--168.
%\bibitem{CTX23}
%X. Chen, Q. Tu and N. Xiang, {\em Pogorelov estimates for semi-convex solutions of $k$-curvature equations}, Proc. Amer. Math. Soc., 2023.

\bibitem{CTX25} X. Chen, Q. Tu and N. Xiang, {\em Dirichlet problem for degenerate Hessian quotient type curvature equations}, Calc. Var.
Partial Differ. Equ. {\bf 64} (2025), 99.


%\bibitem{CJ20} J. Chu and H. Jiao, {\em Curvature estimates for a class of Hessian type equations}, Calc. Var. Partial Differential Equations. {\bf 60}, 90 (2021).

\bibitem{Delano90} F. Delano\`e, {\em The {D}irichlet problem for an equation of given {L}orentz-{G}aussian curvature}, Ukrain. Mat. Zh. {\bf 161} (1990), %no. 12, 
1704--1710.translation in Ukrainian Math. J. {\bf 42} (1990), %no. 12
 1538–1545.

%\bibitem{FRT17} D. de la Fuente, A. Romero, P. J. Torres, {\em Existence and extendibility of rotationally symmetric graphs with a prescribed higher mean curvature function in Euclidean and Minkowski spaces}, J. Math. Anal. Appl. {\bf 446} (2017), 1046--1059.

%\bibitem{Gerhardt96}
%C. Gerhardt,
%{\em Closed Weingarten hypersurfaces in Riemannian manifolds},
%J. Differential Geometry  {\bf 43} (1996), 612--641.

%\bibitem{Gerhardt01} C. Gerhardt, {\em  Hypersurfaces of prescribed curvature in Lorentzian manifolds}, Indiana Univ. Math. J. {\bf 49} (2000), 1125-1153.

%\bibitem{Gerhardt03} C. Gerhardt, {\em  Hypersurfaces of prescribed scalar curvature in Lorentzian manifolds}, J. Reine Angew. Math. {\bf 554} (2003), 157-199.

%\bibitem{GG02} B. Guan and P. Guan, {\em Convex hypersurfaces of prescribed curvatures}, Ann. of Math. (2) {\bf 156} (2002), no. 2, 655--673.

\bibitem{Guan98} B. Guan, {\em The Dirichlet problem for Monge-Amp\`{e}re equations in non-convex domains and spacelike hypersurfaces of constant Gauss curvature}, Trans. Amer. Math. Soc. {\bf 350} (1998), 4955--4971.

\bibitem{Guan14} B. Guan, {\em Second order estimates and regularity for fully nonlinear ellitpic equations on Riemannian manifolds}, Duke Math. J. {\bf 163} (2014), 1491--1524.



%\bibitem{GL94}
%P.F. Guan, Y.Y. Li,{\em The Weyl problem with nonnegative Guass curvature}, J. Diffe. Geom., 39(1994), 331-342.

\bibitem{GGQ22} M. George, B. Guan and C. Qiu, , {\em Fully nonlinear elliptic equations on Hermitian manifolds for symmetric functions of partial Laplacians}, J. Geom. Anal. 32 (2022), no. 6, Paper No. 183, 27 pp.

\bibitem{GN23} B. Guan and X. Nie, {\em Fully nonlinear elliptic equations with gradient terms on Hermitian manifolds}, Int. Math. Res. Not. IMRN 2023, no. 16, 14006–14042.





\bibitem{GS04}B. Guan and J. Spruck, {\em Locally convex hypersurfaces of constant curvature with boundary}, Comm. Pure Appl. Math. {\bf 57} (2004), 1311--1331.

%\bibitem{GLL12} P. Guan, J. Li and Y. Li, {\em Hypersurfaces of prescribed curvature measure}, Duke Math. J. {\bf 161} (2012), no. 10, 1927--1942.

\bibitem{GRW15} P. Guan, C. Ren and Z. Wang, {\em Global $C^2$ estimates for convex solutions of curvature equations}, Commun. Pure Appl. Math. {\bf 68} (2015), 1927--1942.

\bibitem{GJ25} M. Guo and H.  Jiao, {\em The Dirichlet problem for a class of curvature equations in Minkowski space}, J. Differ. Equ. {\bf 425} (2025), 129--156.
%\bibitem{GGM21} Y. Gao, Y. L. Gao, J. Mao, {\em The Dirichlet problem for a class of Hessian quotient equations in Lorentz-Minkowski space $\mathbb{R}_1^{n+1}$},  preprint, arXiv: 2111. 02028, 2021.

%\bibitem{HL13} F. R. Harvey and H. B. Lawson, Jr., {\em $p$-convexity, $p$-plurisubharmonicity and the Levi problem},
%    Indiana Univ. Math. J. {\bf 62} (2013), no. 1, 149--169.

%\bibitem{Huang 13} Y. Huang, {\em Curvature estimates of hypersurfaces in the Minkowski space}, Chin. Ann. Math. Ser. {\bf 34} (2013), 753--764.

\bibitem{Ivochkina90} N. Ivochkina, {\em Solution of the Dirichlet problem for equations of $m$-th order curvature. (Russian)}, Mat. Sb. {\bf 180} (1989), %no. 7, 
867--887, %991; 
translation in Math. USSR-Sb. {\bf 67} (1990), %no. 2,
 317--339.

\bibitem{Ivochkina91} N. Ivochkina, {\em The Dirichlet problem for the curvature equation of order $m$}, Algebra i Analiz {\bf 2} (1990), %no. 3, 
192--217; translation in Leningrad Math. J. {\bf 2} (1991), %no. 3, 
631--654.

\bibitem{ILT96}N. Ivochkina, M. Lin and N. Trudinger {\em The Dirichlet problem for the prescribed curvature quotient equations with general boundary values}, Geometric analysis and the calculus of variations, 125--141. International Press, Cambridge, Mass, 1996.

%\bibitem{IT98}
%N.M. Ivochkina, F. Tomi,{\em Locally convex hypersurfaces of prescribed curvature and boundary}, Calc. Var. Partial Differ. Equ. {\bf 7} (1998), 293-314.

%\bibitem{JJ24}
%H. Jiao and Y. Jiao, {\em The Pogorelov estimates for degenerate curvature equations}, Int. Math. Res. Not. IMRN {\bf 18}, (2024), 12504--12529.

\bibitem{JL20} H. Jiao and J. Liu, {\em On a class of Hessian type equations on Riemannian manifolds}, Proc. Amer. Math. Soc. {\bf 151} (2023), %no. 2,
 569--581.

\bibitem{JaoSun22} H. Jiao and Z. Sun, {\em The Dirichlet problem for a class of prescribed curvature equations}, J. Geom. Anal. {\bf 32} (2022), %no. 11, 
261.

\bibitem{JW22} H. Jiao and Z. Wang, {\em The Dirichlet problem for degenerate curvature equations}, J. Funct. Anal. {\bf 283} (2022), %no. 1, 
109485.


%\bibitem{LT94} M. Lin and N. S. Trudinger, {\em On some inequalities for elementary symmetric functions}, Bull. Austral. Math. Soc. {\bf 50} (1994), 317-326.

%\bibitem{MX18} R. Ma, M. Xu, {\em Positive rotationally symmetric solutions for a Dirichlet problem involving the higher mean curvature operator in Minkowski space}, J. Math. Anal. Appl. {\bf 460} (2018), 33--36.
\bibitem{LT94}
M. Lin and N. Trudinger, {\em The Dirichlet problem for the prescribed curvature quotient equations}, Topol. Methods Nonlinear Anal. {\bf 3} (1994), 307--323.

\bibitem{LMZ22} C. Liu, J. Mao  and Y. Zhao, {\em Pogorelov type estimates for a class of Hessian quotient equations in Lorentz-Minkowski space $\mathbb{R}_1^{n+1}$}, J. Differ. Equ. {\bf 327} (2022), 212--225.

\bibitem{LRW17} M. Li, C. Ren and Z. Wang, {\em An interior estimate for convex solutions and a rigidity theorem}, J. Funct. Anal. {\bf 270} (2016), 2691--2714.  

\bibitem{RW19} C. Ren and Z. Wang, {\em On the curvature estimates for Hessian equations}, Amer. J. Math. {\bf 141} (2019), %no. 5, 
1281--1315.

\bibitem{RW20} C. Ren and Z. Wang, {\em The global curvature estimate for the $n-2$ Hessian equation}, Calc. Var. Partial. Differ. Equ. {\bf 62} (2023), %no. 9, 
239.

%\bibitem{RWX24} C. Ren Z. Wang and L. Xiao, {\em The prescribed curvature problem for entire hypersurfaces in Minkowski space}, Anal. PDE {\bf 17} (2024), no. 1, 1-40.

%\bibitem{Sch02} Schn\"{u}rer, {\em The Dirichlet problem for Weingarten hypersurfaces in Lorentz manifolds}, Math. Z. {\bf 242} (2002), 159-181.

%\bibitem{Sha86} J. P. Sha, {\em p-convex Riemannian manifolds}, Invent. Math. {\bf 83} (1986), no. 3, 437--447.

%\bibitem{Sha87} J. P. Sha, {\em Handlebodies and $p$-convexity}, J. Differential Geom. {\bf 25} (1987), no. 3, 353--361.

%\bibitem{SX17} J. Spruck and L. Xiao, {\em A note on starshaped compact hypersurfaces with a prescribed scalar curvature in space forms}, Rev. Mat. Iberoam. {\bf 33} (2017), 547--554.
\bibitem{S05}
J. Spruck,{\em Geometric aspects of the theory of fully nonlinear
elliptic equations}, Clay Mathematics Proceedings, {\bf 2} (2005), 283--309.

%\bibitem{Trudinger95} N. S. Trudinger, {\em On the Dirichlet problem for Hessian equations}, Acta Math. {\bf 175} (1995), 151--164.

%\bibitem{Urbas02} J. Urbas, {\em Hessian Equations on Compact Riemannian Manifolds}, Nonlinear Problems in Mathematical Physics and Related Topics II 367–377. Kluwer/Plenum, New York (2002).

\bibitem{Urbas03} J. Urbas, {\em The Dirichlet problem for the equation of prescribed scalar curvature in Minkowski space}, Calc. Var. Partial Differ. Equ. {\bf 18} (2003), 307--316.

\bibitem{Urbas 03} J. Urbas, {\em Interior curvature bounds for spacelike hypersurfaces of prescribed k-th mean curvature}, Comm. Anal. Geom. {\bf 18} (2003), 307--316.

%\bibitem{WX22} Z. Wang and L. Xiao, {\em Entire spacelike hypersurfaces with constant $\sigma_{k}$ curvature in Minkowski space},  Math. Ann. {\bf 382} (2022), 1279--1322.

%\bibitem{Wu87} H. Wu, {\em Manifolds of partially positive curvature}, Indiana Univ. Math. J. {\bf 36} (1987), no. 3, 525--548.

\bibitem{WangB25} B. Wang, {\em The Dirichlet problem for the prescribed curvature equations in Minkowski space}, J. Funct. Anal. {\bf 289} (2025), 111149. 


\bibitem{WangZZ} Z. Wang, {\em The global curvature estimates for Hessian equations}, Proceedings of the International Consortium of Chinese Mathematicians 2018 (2020), %pp.
 367--384.
 
%\bibitem{Yuan21} R. Yuan, {\em On the partial uniform ellipticity and complete conformal metrics with prescribed
% 	curvature functions on manifolds with boundary}, arXiv:2011.08580. %pp.
 	
%\bibitem{Yuan22} R. Yuan, {\em The partial uniform ellipticity and prescribed problems on the conformal classes of
%	complete metrics}, arXiv:2203.13212.

%\bibitem{Yuan23} R. Yuan, {\em An extension of prescribed problems on the conformal classes of complete metrics},
%	arXiv:2304.12835.
	
\bibitem{Yuan25a} R. Yuan, {\em On a level set version of partial uniform ellipticity and applications to a Krylov type equation}, Pure Appl. Math. Q. 21 (2025), no. 6, 2229–2256.
	
\bibitem{Yuan25b} R. Yuan, {\em The structure of fully nonlinear equations and its applications to prescribed problems
	on complete conformal metrics}, arXiv:2503.18757.

\bibitem{Zhou24} J. Zhou, {\em Curvature estimates for a class of Hessian quotient type curvature equations}, Calc. Var. Partial Differ. Equ. {\bf 63} (2024), 88.

%\bibitem{Wang20} Z. Wang, {\em The global curvature estimates for Hessian equations}, Int. Press, Boston, MA, {\bf } (2020), 367--384.
 



\end{thebibliography}
\end{document}